 \let\footnote=\endnote
\theoremstyle{plain}
\newtheorem{thm}{\protect\theoremname}
\newtheorem{thm}{\protect\theoremname}[chapter]
  \theoremstyle{plain}
  \newtheorem{lem}[thm]{\protect\lemmaname}
  \theoremstyle{remark}
  \newtheorem{rem}[thm]{\protect\remarkname}
  \providecommand{\lemmaname}{Lemma}
  \providecommand{\remarkname}{Remark}
\providecommand{\theoremname}{Theorem}
\begin{document}

\begin{frontmatter}

\begin{fmbox}
\dochead{Research}


\title{A numerical approach to approximation for a nonlinear ultraparabolic
equation}


\author[
   addressref={aff1},                   
   corref={aff1},                       
   email={vakhoa.hcmus@gmail.com}   
]{\inits{VAK}\fnm{Vo Anh} \snm{Khoa}}
\author[
   addressref={aff1},
   email={tronglanmath@gmail.com}
]{\inits{LTL}\fnm{Le Trong} \snm{Lan}}
\author[
   addressref={aff1,aff2},
   email={yenngoc0202@gmail.com}
]{\inits{NTYN}\fnm{Nguyen Thi Yen} \snm{Ngoc}}
\author[
   addressref={aff1,aff3},
   email={thnguyen2683@gmail.com}
]{\inits{NTH}\fnm{Nguyen Huy} \snm{Tuan}}


\address[id=aff1]{
  \orgname{Department of Mathematics and Computer Science, Ho Chi Minh City University of Science}, 
  \street{227 Nguyen Van Cu Street, District 5},                     %
  \city{Ho Chi Minh City},                              
  \cny{Vietnam}                                    
}
\address[id=aff2]{%
  \orgname{Institut de Mathématiques de Bordeaux, Université de Bordeaux},
  \street{351 Cours de la Libération},
  \city{33405 Talence Cedex},
  \cny{France}
}
\address[id=aff3]{%
  \orgname{Saigon Institute for Computational Science and Technology},
  \street{Quang Trung Software City, District 12},
  \city{Ho Chi Minh City},
  \cny{Vietnam}
}



\end{fmbox}


\begin{abstractbox}

\begin{abstract} 
In this paper, our aim is to study a numerical method for an ultraparabolic equation with nonlinear source function. Mathematically,
the bibliography on initial-boundary value problems for ultraparabolic
equations is not extensive although the problems have many applications
related to option pricing, multi-parameter Brownian motion, population
dynamics and so forth. In this work, we present the approximate solution
by virtue of finite difference scheme and Fourier series. For the
linear case, we give the approximate solution and obtain a stability
result. For the nonlinear case, we use an iterative scheme by linear
approximation to get the approximate solution and obtain error estimates.
Some numerical examples are given to demonstrate the efficiency of
the method.


\end{abstract}


\begin{keyword}
\kwd{ultraparabolic equation}
\kwd{operator equation}
\kwd{finite difference scheme}
\kwd{Fourier series}
\kwd{linear approximation}
\kwd{stability}
\end{keyword}


\end{abstractbox}
%

\end{frontmatter}



\section{Introduction}

Let $\mathcal{H}$ be a Hilbert space with the inner product $\left\langle .,.\right\rangle $
and the norm $\left\Vert .\right\Vert $. In this paper, we study
the following ultraparabolic equation

\begin{equation}
\frac{\partial}{\partial t}u\left(t,s\right)+\frac{\partial}{\partial s}u\left(t,s\right)+\mathcal{L}u\left(t,s\right)=f\left(u\left(t,s\right),t,s\right),\quad\left(t,s\right)\in\left(0,T\right)\times\left(0,T\right),\label{eq:1}
\end{equation}

associated with the initial conditions

\begin{equation}
u\left(0,s\right)=\alpha\left(s\right),\quad u\left(t,0\right)=\beta\left(t\right).\label{eq:2}
\end{equation}

where $\mathcal{L}:\mathcal{D}\left(\mathcal{L}\right)\subset\mathcal{H}\to\mathcal{H}$
is a positive-definite, self-adjoint operator with compact inverse
on $\mathcal{H}$ and $\alpha,\beta$ are known smooth functions satisfying
$\alpha\left(0\right)=\beta\left(0\right)$ for compatibility at $\left(t,s\right)=\left(0,0\right)$
and $f$ is a nonlinear source function satisfying some conditions
which will be fully presented in the next section.

The problem (\ref{eq:1})-(\ref{eq:2}) involving multi-dimensional
time variables is called the initial-boundary value problem for ultraparabolic
equation. The ultraparabolic equation has many applications in mathematical
finance (e.g. \cite{key-10}), physics (such as multi-parameter Brownian
motion \cite{key-22}) and biological model. Among many applications,
the equation (\ref{eq:1}) arises as a mathematical model of population
dynamics, for instance, the dynamics of the age structure of an isolated
at the distinct moments of astronomical or biological time and $u\left(t,s\right)$
in this application plays a role as the number of individuals of age
$s$ in the population at time $t$. The study of ultraparabolic equation
for population dynamics can be found in some papers such as \cite{key-9,key-11}.
In particular, Kozhanov \cite{key-11} studied the existence and uniqueness
of regular solutions and its properties for an ultraparabolic model
equation in the form of

\[
\frac{\partial u}{\partial t}+\frac{\partial u}{\partial s}-\Delta u+h\left(x,t,s\right)u+u\mathcal{A}u=f\left(x,t,s\right),
\]

where $\Delta$ is Laplace operator, $\mathcal{A}$ is a nonlocal
linear operator. In the same work, the authors Deng and Hallam in
\cite{key-9} considered the age structured population problem formed

\[
\frac{\partial u}{\partial t}+\frac{\partial u}{\partial s}-\nabla\cdot\left(k\nabla u-qu\right)=-\mu u,
\]

associated with non-locally integro-type initial-bounded conditions.

The ultraparabolic equation is also studied in many other aspects.
In the phase of inverse problems, Lorenzi \cite{key-17} studied the
well-posedness of a class of an inverse problem for ultraparabolic
partial integrodifferential equations of Volterra type. Very recently,
Zouyed and Rebbani \cite{key-6} proposed the modified quasi-boundary
value method to regularize the equation (\ref{eq:1}) in homogeneous
backward case in a class of ill-posed problems. For another studies
regarding the properties of solutions of abstract ultraparabolic equations,
we can find many papers and some of them are refered to \cite{key-12,key-13,key-15,key-16,key-18,key-20}.

Even though the numerical method for such a problem is studied long
time ago, it is still very limited. We only find some papers, such
as \cite{key-4,key-7,key-19}. The authors Akrivis, Crouzeix and Thomée
\cite{key-4} investigated a backward Euler scheme and second-order
box-type finite difference procedure to numerically approximate the
solution to the Dirichlet problem for the ultraparabolic equation
(\ref{eq:1})-(\ref{eq:2}) in two different time intervals with the
Laplace operator $\mathcal{L}=-\Delta$ and source function $f\equiv0$.
Recently, Ashyralyev and Yilmaz \cite{key-7} constructed the first
and second order difference schemes to approximate the problem (\ref{eq:1})-(\ref{eq:2})
for strongly positive operator and obtained some fundamental stability
results. On the other hand, Marcozzi et al. \cite{key-19} developed
an adaptive method-of-lines extrapolation discontinuous Galerkin method
for an ultraparabolic model equation given by

\[
\frac{\partial u}{\partial t}+a\left(x\right)\frac{\partial u}{\partial s}-\Delta u=f\left(x,t,s\right),
\]

with a certain application to the price of an Asian call option.

However, we can see that most of papers for numerical methods aim
to study linear cases. Equivalently, numerical methods for nonlinear
equations are investigated rarely. Therefore, in this paper we shall
study the model problem (\ref{eq:1})-(\ref{eq:2}) in the numerical
angle for the smooth solution. From the idea of finite difference
scheme and conveying a fundamental result in operator theory, we construct
an approximate solution for the nonhomogeneous equation in terms of
Fourier series. Combining the same technique and linear approximation,
the approximate solution for the nonlinear case is established.

The rest of the paper is organized as follows. In Section 2, we shall
consider the linear nonhomogeneous problem (\ref{eq:1})-(\ref{eq:2})
under a result of presentation of discretization solution in multi-dimensional
problem. The nonlinear problem is considered in Section 3 and an iterative
scheme is showed. Finally, four numerical examples are implemented
in Section 4 to verify the effect of the method.

\section{The linear nonhomogeneous problem}

In this section, we shall introduce the suitable discrete operator
used in the time discretization. In order to define the discrete operator
involved in the equation in the problem (\ref{eq:1})-(\ref{eq:2}),
we consider the multi-dimensional problem given by

\begin{equation}
\left(\sum_{i=1}^{d}\frac{\partial}{\partial t_{i}}\right)u\left(t_{1},t_{2},...,t_{d}\right)+\mathcal{L}u\left(t_{1},t_{2},...,t_{d}\right)=f\left(t_{1},t_{2},...,t_{d}\right),\quad t_{i}\in\left(0,T\right),1\le i\le d,\label{eq:3}
\end{equation}

associated with $d$ initial conditions

\begin{equation}
u\left(0,t_{2},...,t_{d}\right)=\alpha_{1}\left(t_{2},t_{3},...,t_{d}\right),...,u\left(t_{1},t_{2},...,0\right)=\alpha_{d}\left(t_{1},t_{2},...,t_{d-1}\right),\label{eq:4}
\end{equation}

for $t_{i}\in\left[0,T\right],1\le i\le d$ are $d$ time variables
and $d\ge2$.

Since $\mathcal{L}^{-1}$ is compact, the operator $\mathcal{L}$
admits an orthonormal eigenbasis $\left\{ \phi_{n}\right\} _{n\ge1}$
for $\mathcal{H}$ and eigenvalues $\dfrac{1}{\lambda_{n}}$ of $\mathcal{L}^{-1}$
such that $\mathcal{L}^{-1}\phi_{n}=\dfrac{1}{\lambda_{n}}\phi_{n}$
and $0<\lambda_{1}\le\lambda_{2}\le...{\displaystyle \lim_{n\to\infty}\lambda_{n}}=\infty$.
Thus, we have from (\ref{eq:3}) that

\begin{equation}
\left(\sum_{i=1}^{d}\frac{\partial}{\partial t_{i}}\right)\left\langle u\left(t_{1},t_{2},...,t_{d}\right),\phi_{n}\right\rangle +\lambda_{n}\left\langle u\left(t_{1},t_{2},...,t_{d}\right),\phi_{n}\right\rangle =\left\langle f\left(t_{1},t_{2},...,t_{d}\right),\phi_{n}\right\rangle ,\label{eq:5}
\end{equation}

and the conditions (\ref{eq:4}) can be transformed into

\begin{equation}
\left\langle u\left(0,t_{2},...,t_{d}\right),\phi_{n}\right\rangle =\left\langle \alpha_{1}\left(t_{2},t_{3},...,t_{d}\right),\phi_{n}\right\rangle ,...,\left\langle u\left(t_{1},t_{2},...,0\right),\phi_{n}\right\rangle =\left\langle \alpha_{d}\left(t_{1},t_{2},...,t_{d-1}\right),\phi_{n}\right\rangle .
\end{equation}

By putting

\[
u_{n}\left(t_{1},t_{2},...,t_{d}\right)=\left\langle u\left(t_{1},t_{2},...,t_{d}\right),\phi_{n}\right\rangle ,\quad f_{n}\left(t_{1},t_{2},...,t_{d}\right)=\left\langle f\left(t_{1},t_{2},...,t_{d}\right),\phi_{n}\right\rangle ,
\]

\[
\alpha_{1,n}\left(t_{2},t_{3},...,t_{d}\right)=\left\langle \alpha_{1}\left(t_{2},t_{3},...,t_{d}\right),\phi_{n}\right\rangle ,...,\alpha_{d,n}\left(t_{1},t_{2},...,t_{d-1}\right)=\left\langle \alpha_{d}\left(t_{1},t_{2},...,t_{d-1}\right),\phi_{n}\right\rangle ,
\]

we shall establish an discrete problem by knowledge of finite difference
scheme.

By multiplying both sides of (\ref{eq:5}) by $\mu_{n}\left(t_{1},t_{2},...,t_{d}\right):=\mbox{exp}\left({\displaystyle \frac{\lambda_{n}}{d}\sum_{i=1}^{d}}t_{i}\right)$,
we get

\[
\left(\sum_{i=1}^{d}\frac{\partial}{\partial t_{i}}\right)\left(u_{n}\left(t_{1},t_{2},...,t_{d}\right)\mu_{n}\left(t_{1},t_{2},...,t_{d}\right)\right)=f_{n}\left(t_{1},t_{2},...,t_{d}\right)\mu_{n}\left(t_{1},t_{2},...,t_{d}\right).
\]

By putting

\begin{eqnarray*}
v_{n}\left(t_{1},t_{2},...,t_{d}\right) & = & u_{n}\left(t_{1},t_{2},...,t_{d}\right)\mu_{n}\left(t_{1},t_{2},...,t_{d}\right)\\
F_{n}\left(t_{1},t_{2},...,t_{d}\right) & = & f_{n}\left(t_{1},t_{2},...,t_{d}\right)\mu_{n}\left(t_{1},t_{2},...,t_{d}\right),
\end{eqnarray*}

we thus have the problem (\ref{eq:3})-(\ref{eq:4}) in a new form.

\begin{equation}
\left(\sum_{i=1}^{d}\frac{\partial}{\partial t_{i}}\right)v_{n}\left(t_{1},t_{2},...,t_{d}\right)=F_{n}\left(t_{1},t_{2},...,t_{d}\right),\label{eq:7}
\end{equation}

associated with the initial conditions

\begin{eqnarray}
v_{n}\left(0,t_{2},...,t_{d}\right) & = & \beta_{1,n}\left(t_{2},t_{3},...,t_{d}\right):=\alpha_{1,n}\left(t_{2},t_{3},...,t_{d}\right)\mu_{n}\left(0,t_{2},...,t_{d}\right),\nonumber \\
\vdots & \vdots & \vdots\nonumber \\
v_{n}\left(t_{1},t_{2},...,0\right) & = & \beta_{d,n}\left(t_{1},t_{2},...,t_{d-1}\right):=\alpha_{d,n}\left(t_{1},t_{2},...,t_{d-1}\right)\mu_{n}\left(t_{1},t_{2},...,0\right).\label{eq:8}
\end{eqnarray}

Then, we treat the transport-shape problem (\ref{eq:7})-(\ref{eq:8})
by finite difference scheme. However, note that standard finite difference
scheme may suffer an unstable behaviour in approximate aspect. To
avoid the behaviour and lead to our discrete operator, we shall take
the equivalent mesh-width in time $\Delta t_{i}=\omega$ for all $1\le i\le d$.
Also, the source term $f$ in this case is discretized at center nodes
to obtain a better order accurate in the first order scheme. They
offer stable discretizations of the problem well and then preserve
the discrete approach.

For $v_{n}^{k_{1},k_{2},...,k_{d}}:=v_{n}\left(\left(t_{1}\right)_{k_{1}},\left(t_{2}\right)_{k_{2}},...,\left(t_{d}\right)_{k_{d}}\right)$,
we get

\begin{eqnarray}
\frac{v_{n}^{k_{1},k_{2},...,k_{d}}-v_{n}^{k_{1}-1,k_{2},...,k_{d}}}{\omega}+\frac{v_{n}^{k_{1}-1,k_{2},...,k_{d}}-v_{n}^{k_{1}-1,k_{2}-1,...,k_{d}}}{\omega}+...\nonumber \\
...+\frac{v_{n}^{k_{1}-1,k_{2}-1,...,k_{d}}-v_{n}^{k_{1}-1,k_{2}-1,...,k_{d}-1}}{\omega}=F_{n}^{k_{1},k_{2},...,k_{d}}\label{eq:9}
\end{eqnarray}

\[
F_{n}^{k_{1},k_{2},...,k_{d}}=f_{n}\left(\left(t_{1}\right)_{k_{1}}-\frac{\omega}{2},\left(t_{2}\right)_{k_{2}}-\frac{\omega}{2},...,\left(t_{d}\right)_{k_{d}}-\frac{\omega}{2}\right)\mu_{n}\left(\left(t_{1}\right)_{k_{1}},\left(t_{2}\right)_{k_{2}},...,\left(t_{d}\right)_{k_{d}}\right),
\]

for $1\le k_{i}\le M,1\le i\le d$,

\begin{eqnarray}
v_{n}^{0,k_{2},...,k_{d}} & = & \beta_{1,n}^{k_{2},k_{3},...,k_{d}}:=\alpha_{1,n}\left(\left(t_{2}\right)_{k_{2}},\left(t_{3}\right)_{k_{3}},...,\left(t_{d}\right)_{k_{d}}\right)\mu_{n}\left(0,\left(t_{2}\right)_{k_{2}},...,\left(t_{d}\right)_{k_{d}}\right)\nonumber \\
\vdots & \vdots & \vdots\nonumber \\
v_{n}^{k_{1},k_{2},...,0} & = & \beta_{d,n}^{k_{1},k_{2},...,k_{d-1}}:=\alpha_{d,n}\left(\left(t_{1}\right)_{k_{1}},\left(t_{2}\right)_{k_{2}},...,\left(t_{d-1}\right)_{k_{d-1}}\right)\mu_{n}\left(\left(t_{1}\right)_{k_{1}},\left(t_{2}\right)_{k_{2}},...,0\right),\nonumber \\
\label{eq:10}
\end{eqnarray}

for $0\le k_{i}\le M$ and where $\left(t_{i}\right)_{k_{i}}=k_{i}\omega,1\le i\le d$
and $M\omega=T$.

By simple calculation, it follows from (\ref{eq:9}) that the local
discrete operator is

\begin{equation}
v_{n}^{k_{1},k_{2},...,k_{d}}=\omega F_{n}^{k_{1},k_{2},...,k_{d}}+v_{n}^{k_{1}-1,k_{2}-1,...,k_{d}-1}.\label{eq:11}
\end{equation}

\begin{lem}
For every $n\in\mathbb{N}$, if $v_{n}^{k_{1},k_{2},...,k_{d}}$ satisfies
(\ref{eq:11}) for all $k_{i}\in\left[1,M\right],1\le i\le d$, then
one has

\begin{equation}
v_{n}^{k_{1},k_{2},...,k_{d}}=\omega\sum_{l=1}^{p}F_{n}^{k_{1}-p+l,k_{2}-p+l,...,k_{d}-p+l}+v_{n}^{k_{1}-p,k_{2}-p,...,k_{d}-p},\label{eq:12}
\end{equation}

for all $p\in\mathbb{N}$.\end{lem}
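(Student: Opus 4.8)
The plan is to establish (\ref{eq:12}) by induction on the number of unfolding steps $p$, using the one-step recurrence (\ref{eq:11}) both as the base case and as the mechanism driving the inductive step. For the base case $p=1$, I would simply note that the sum $\sum_{l=1}^{1}$ collapses to its single term $F_{n}^{k_{1},k_{2},\ldots,k_{d}}$ and that the remainder is $v_{n}^{k_{1}-1,k_{2}-1,\ldots,k_{d}-1}$, so that (\ref{eq:12}) reduces verbatim to (\ref{eq:11}) and holds trivially.

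For the inductive step, assume (\ref{eq:12}) holds for some $p\in\mathbb{N}$. I would apply (\ref{eq:11}) once more, now to the remainder term $v_{n}^{k_{1}-p,k_{2}-p,\ldots,k_{d}-p}$ appearing on the right-hand side, giving
\[
v_{n}^{k_{1}-p,k_{2}-p,\ldots,k_{d}-p}=\omega F_{n}^{k_{1}-p,k_{2}-p,\ldots,k_{d}-p}+v_{n}^{k_{1}-p-1,k_{2}-p-1,\ldots,k_{d}-p-1}.
\]
Substituting this into the inductive hypothesis produces the newly generated source term $\omega F_{n}^{k_{1}-p,\ldots,k_{d}-p}$, the existing sum $\omega\sum_{l=1}^{p}F_{n}^{k_{1}-p+l,\ldots,k_{d}-p+l}$, and the advanced remainder $v_{n}^{k_{1}-p-1,\ldots,k_{d}-p-1}$.

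The only genuine task is then the index bookkeeping needed to recombine these pieces into the shape of (\ref{eq:12}) with $p$ replaced by $p+1$. I would re-index the target sum $\sum_{l=1}^{p+1}F_{n}^{k_{1}-(p+1)+l,\ldots,k_{d}-(p+1)+l}$ and verify that its $l=1$ term is precisely $F_{n}^{k_{1}-p,\ldots,k_{d}-p}$, while its terms for $l=2,\ldots,p+1$ reproduce the existing sum $\sum_{l=1}^{p}F_{n}^{k_{1}-p+l,\ldots,k_{d}-p+l}$. In this way the fresh source term slots in at the bottom of the shifted sum and the remainder becomes $v_{n}^{k_{1}-(p+1),\ldots,k_{d}-(p+1)}$, which closes the induction. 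I do not anticipate any real obstacle here: the diagonal structure of (\ref{eq:11}), in which every time index decreases by one simultaneously, guarantees that all shifted arguments align term by term, so the recombination is purely mechanical.
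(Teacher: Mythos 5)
Your proposal is correct and follows essentially the same route as the paper: induction on $p$, with the base case $p=1$ coinciding with (\ref{eq:11}), the inductive step obtained by applying (\ref{eq:11}) once to the remainder term $v_{n}^{k_{1}-p,\ldots,k_{d}-p}$, and a re-indexing of the sum to absorb the fresh source term. If anything, your write-up is cleaner than the paper's, whose redundant $p=2$ verification contains a typographical slip ($f_{n}$ for $F_{n}$ and a garbled conclusion).
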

\begin{proof}
We shall prove (\ref{eq:12}) by induction. We can see that it holds
for $p=1$.

For $p=2$, we have

\begin{eqnarray*}
v_{n}^{k_{1},k_{2},...,k_{d}} & = & \omega F_{n}^{k_{1},k_{2},...,k_{d}}+\omega f_{n}^{k_{1}-1,k_{2}-1,...,k_{d}-1}+v_{n}^{k_{1}-2,k_{2}-2,...,k_{d}-2}\\
 & = & \omega F_{n}^{k_{1},k_{2},...,k_{d}}+v_{n}^{k_{1}-1,k_{2}-1,...,k_{d}-1}.
\end{eqnarray*}

Thus, (\ref{eq:12}) holds for $p=2$.

Now, we assume that (\ref{eq:12}) holds for $p=r$. It means that

\[
v_{n}^{k_{1},k_{2},...,k_{d}}=\omega\sum_{l=1}^{r}F_{n}^{k_{1}-r+l,k_{2}-r+l,...,k_{d}-r+l}+v_{n}^{k_{1}-r,k_{2}-r,...,k_{d}-r}.
\]

We shall prove that it aslo holds for $p=r+1$. Indeed, we get

\begin{eqnarray*}
v_{n}^{k_{1},k_{2},...,k_{d}} & = & \omega\sum_{l=1}^{r}F_{n}^{k_{1}-r+l,k_{2}-r+l,...,k_{d}-r+l}\\
 &  & +\omega F_{n}^{k_{1}-r,k_{2}-r,...,k_{d}-r}+v_{n}^{k_{1}-r-1,k_{2}-r-1,...,k_{d}-r-1}\\
 & = & \omega\sum_{l=1}^{r+1}F_{n}^{k_{1}-r-1+l,k_{2}-r-1+l,...,k_{d}-r-1+l}+v_{n}^{k_{1}-r-1,k_{2}-r-1,...,k_{d}-r-1}.
\end{eqnarray*}

Therefore, (\ref{eq:12}) holds for $p=r+1$. By induction, we completely
finish the proof.
\end{proof}
From (\ref{eq:12}), we shall obtain the discrete solution by Fourier
series. It should be stated that the discrete solution is, in multi-dimensional,
involved by many situations according to the set $\mathcal{E}_{d}=\left\{ k_{1},k_{2},...,k_{d}\right\} $,
more exactly $d!$ situations. As introduced, in this paper we aim
to consider the ultraparabolic problem with two time dimension since
there are many studies on this problem in real application. Hence,
the solution of the discrete problem of (\ref{eq:1})-(\ref{eq:2})
in linear nonhomogeneous case is

\[
v_{n}^{k_{1},k_{2}}=\omega\sum_{l=1}^{p}F_{n}^{k_{1}-p+l,k_{2}-p+l}+v_{n}^{k_{1}-p,k_{2}-p},
\]

and its explicit form is given as follows.

If $k_{1}>k_{2}$, we replace $p$ by $k_{2}$ to get

\begin{eqnarray}
v_{n}^{k_{1},k_{2}} & = & \omega\sum_{l=1}^{k_{2}}F_{n}^{k_{1}-k_{2}+l,l}+v_{n}^{k_{1}-k_{2},0}\nonumber \\
 & = & \omega\sum_{l=1}^{k_{2}}F_{n}^{k_{1}-k_{2}+l,l}+\beta_{n}^{k_{1}-k_{2}}\mu_{n}^{k_{1}-k_{2},0}.\label{eq:13}
\end{eqnarray}

Similarly, for $k_{2}>k_{1}$ we replace $p$ by $k_{1}$ to obtain

\begin{equation}
v_{n}^{k_{1},k_{2}}=\omega\sum_{l=1}^{k_{1}}F_{n}^{l,k_{2}-k_{1}+l}+\alpha_{n}^{k_{2}-k_{1}}\mu_{n}^{0,k_{2}-k_{1}}.\label{eq:14}
\end{equation}

From (\ref{eq:13})-(\ref{eq:14}), we conclude the discrete solution
for the two-time-variable ultraparabolic (\ref{eq:1})-(\ref{eq:2})
in linear nonhomogeneous case is

\begin{eqnarray}
u^{k_{1},k_{2}} & = & \sum_{n=1}^{\infty}v_{n}^{k_{1},k_{2}}\left(\mu_{n}^{k_{1},k_{2}}\right)^{-1}\phi_{n}\nonumber \\
 & = & \begin{cases}
\sum_{n=1}^{\infty}\omega\sum_{l=1}^{k_{2}}\mu_{n}^{k_{1}-k_{2}+l,l}\left(\mu_{n}^{k_{1},k_{2}}\right)^{-1}f_{n}^{k_{1}-k_{2}+l,l}\phi_{n}\\
+\sum_{n=1}^{\infty}\left(\mu_{n}^{k_{1},k_{2}}\right)^{-1}\beta_{n}^{k_{1}-k_{2}}\mu_{n}^{k_{1}-k_{2},0}\phi_{n} & ,k_{1}>k_{2},\\
\sum_{n=1}^{\infty}\omega\sum_{l=1}^{k_{1}}\mu_{n}^{l,k_{2}-k_{1}+l}\left(\mu_{n}^{k_{1},k_{2}}\right)^{-1}f_{n}^{l,k_{2}-k_{1}+l}\phi_{n}\\
+\sum_{n=1}^{\infty}\left(\mu_{n}^{k_{1},k_{2}}\right)^{-1}\alpha_{n}^{k_{2}-k_{1}}\mu_{n}^{0,k_{2}-k_{1}}\phi_{n} & ,k_{2}>k_{1}.
\end{cases}\label{eq:15}
\end{eqnarray}

Furthermore, we obtain a stability result in the following theorem.
\begin{thm}
Let $u^{k_{1},k_{2}}$ in (\ref{eq:15}) be the discrete solution
of the problem (\ref{eq:1})-(\ref{eq:2}) in linear nonhomogeneous
case, then there exists a positive constant $C_{T}$ independent of
$k_{1},k_{2}$ and $\omega$ such that

\[
\sup_{1\le k_{1},k_{2}\le M}\left\Vert u^{k_{1},k_{2}}\right\Vert ^{2}\le C_{T}\left(\sup_{1\le k_{1},k_{2}\le M}\left\Vert f^{k_{1},k_{2}}\right\Vert ^{2}+\sup_{0\le k_{1}\le M}\left\Vert \alpha^{k_{2}}\right\Vert ^{2}+\sup_{0\le k_{1}\le M}\left\Vert \beta^{k_{1}}\right\Vert ^{2}\right).
\]
\end{thm}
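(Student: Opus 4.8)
The plan is to read off the two explicit branches of the discrete solution from (\ref{eq:15}) and estimate each separately, since the case $k_{1}>k_{2}$ and the case $k_{2}>k_{1}$ are mirror images of one another (a source term plus the boundary data $\beta$ in the first, a source term plus the initial data $\alpha$ in the second). Throughout I would exploit that $\left\{ \phi_{n}\right\} $ is orthonormal, so that for any coefficient sequence the Hilbert-space norm collapses to a sum of squares by Parseval's identity. A single application of $\left\Vert A+B\right\Vert ^{2}\le 2\left\Vert A\right\Vert ^{2}+2\left\Vert B\right\Vert ^{2}$ then splits each branch of (\ref{eq:15}) into a \emph{source part} $A$ and a \emph{data part} $B$, which I would bound independently.

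The heart of the argument is the observation that every ratio of the weights $\mu_{n}$ appearing in (\ref{eq:15}) is bounded by $1$, and this is exactly where the positive-definiteness of $\mathcal{L}$ enters. For $d=2$ we have $\mu_{n}^{k_{1},k_{2}}=\exp\!\big(\tfrac{\lambda_{n}\omega}{2}(k_{1}+k_{2})\big)$, so a direct computation gives, for $1\le l\le k_{2}$,
\[
\frac{\mu_{n}^{k_{1}-k_{2}+l,\,l}}{\mu_{n}^{k_{1},k_{2}}}=e^{\lambda_{n}\omega(l-k_{2})}\le 1,\qquad \frac{\mu_{n}^{k_{1}-k_{2},0}}{\mu_{n}^{k_{1},k_{2}}}=e^{-\lambda_{n}\omega k_{2}}\le 1,
\]
because $\lambda_{n}>0$ and $l-k_{2}\le 0$ (the case $k_{2}>k_{1}$ is identical after swapping the roles of $k_{1}$ and $k_{2}$). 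These bounds let me discard all exponential factors at no cost, reducing everything to the raw Fourier coefficients $f_{n}$, $\beta_{n}$ and $\alpha_{n}$.

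With this in hand the data part is immediate: by Parseval and the weight bound,
\[
\left\Vert B\right\Vert ^{2}=\sum_{n\ge 1}\Big(\tfrac{\mu_{n}^{k_{1}-k_{2},0}}{\mu_{n}^{k_{1},k_{2}}}\Big)^{2}\big(\beta_{n}^{k_{1}-k_{2}}\big)^{2}\le \left\Vert \beta^{k_{1}-k_{2}}\right\Vert ^{2}\le \sup_{0\le j\le M}\left\Vert \beta^{j}\right\Vert ^{2}.
\]
For the source part I would first apply the Cauchy--Schwarz inequality to the finite sum over $l$, producing a factor $k_{2}$, and then use the weight bound and Parseval term by term:
\[
\left\Vert A\right\Vert ^{2}\le \omega^{2}k_{2}\sum_{l=1}^{k_{2}}\sum_{n\ge 1}\big(f_{n}^{k_{1}-k_{2}+l,\,l}\big)^{2}\le \omega^{2}k_{2}^{2}\sup_{1\le i,j\le M}\left\Vert f^{i,j}\right\Vert ^{2}.
\]
The crucial cancellation of the mesh size is then $\omega k_{2}\le \omega M=T$, so that $\left\Vert A\right\Vert ^{2}\le T^{2}\sup_{i,j}\left\Vert f^{i,j}\right\Vert ^{2}$, uniformly in $\omega$ and in $k_{1},k_{2}$. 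Combining the two parts in each branch and taking the supremum over $1\le k_{1},k_{2}\le M$ then yields the claim with $C_{T}=2\max\{T^{2},1\}$.

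I expect the only genuine obstacle to be bookkeeping rather than analysis: keeping the index shifts in (\ref{eq:15}) straight across the two branches, and making sure the powers of $\omega$ generated by Cauchy--Schwarz are absorbed \emph{exactly} through the relation $\omega M=T$, so that the resulting constant is truly $\omega$-independent. A minor point to address is that the source in (\ref{eq:15}) is sampled at the half-integer center nodes $(t_{i})_{k_{i}}-\omega/2$ rather than at the grid points; this is harmless provided $\sup_{i,j}\left\Vert f^{i,j}\right\Vert $ is understood to range over (or to dominate the values at) those sample points.
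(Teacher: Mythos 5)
Your proposal is correct and follows essentially the same route as the paper's own proof: Parseval's identity, a factor-of-two splitting into source and data parts, the key observation that $\mu_{n}^{k_{1}-k_{2}+l,l}\le\mu_{n}^{k_{1},k_{2}}$ and $\mu_{n}^{k_{1}-k_{2},0}\le\mu_{n}^{k_{1},k_{2}}$ (which the paper states without your explicit exponent computation), and the absorption $\omega k_{2}\le\omega M=T$ to make the constant mesh-independent. Your closing remarks on the index bookkeeping and on $f$ being sampled at center nodes are sensible refinements of details the paper glosses over, but they do not change the argument.
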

\begin{proof}
Since $\mu_{n}^{k_{1}-k_{2}+l,l}\le\mu_{n}^{k_{1},k_{2}}$ and $\mu_{n}^{k_{1}-k_{2},0}\le\mu_{n}^{k_{1},k_{2}}$,
by using Parseval's identity we have

\begin{eqnarray}
\left\Vert u^{k_{1},k_{2}}\right\Vert ^{2} & = & \sum_{n=1}^{\infty}\left|v_{n}^{k_{1},k_{2}}\left(\mu_{n}^{k_{1},k_{2}}\right)^{-1}\right|^{2}\nonumber \\
 & \le & 2\omega^{2}\sum_{n=1}^{\infty}\left|\sum_{l=1}^{k_{2}}\mu_{n}^{k_{1}-k_{2}+l,l}\left(\mu_{n}^{k_{1},k_{2}}\right)^{-1}f_{n}^{k_{1}-k_{2}+l,l}\right|^{2}\nonumber \\
 &  & +2\sum_{n=1}^{\infty}\left|\left(\mu_{n}^{k_{1},k_{2}}\right)^{-1}\mu_{n}^{k_{1}-k_{2},0}\beta_{n}^{k_{1}-k_{2}}\right|^{2}\nonumber \\
 & \le & 2k_{2}^{2}\omega^{2}\sup_{1\le k_{1},k_{2}\le M}\sum_{n=1}^{\infty}\left|f_{n}^{k_{1},k_{2}}\right|^{2}+2\sum_{n=1}^{\infty}\left|\beta_{n}^{k_{1}-k_{2}}\right|^{2}\nonumber \\
 & \le & 2T^{2}\sup_{1\le k_{1},k_{2}\le M}\left\Vert f^{k_{1},k_{2}}\right\Vert ^{2}+2\left\Vert \beta^{k_{1}-k_{2}}\right\Vert ^{2}\nonumber \\
 & \le & C_{T}\left(\sup_{1\le k_{1},k_{2}\le M}\left\Vert f^{k_{1},k_{2}}\right\Vert ^{2}+\sup_{0\le k_{1}\le M}\left\Vert \beta^{k_{1}}\right\Vert ^{2}\right),\label{eq:16}
\end{eqnarray}

for $k_{1}>k_{2}$.

Similarly, for $k_{2}>k_{1}$ we also have

\begin{eqnarray}
\left\Vert u^{k_{1},k_{2}}\right\Vert ^{2} & \le & 2T^{2}\sup_{1\le k_{1},k_{2}\le M}\left\Vert f^{k_{1},k_{2}}\right\Vert ^{2}+2\left\Vert \alpha^{k_{2}-k_{1}}\right\Vert ^{2}\nonumber \\
 & \le & C_{T}\left(\sup_{1\le k_{1},k_{2}\le M}\left\Vert f^{k_{1},k_{2}}\right\Vert ^{2}+\sup_{0\le k_{2}\le M}\left\Vert \alpha^{k_{2}}\right\Vert ^{2}\right).\label{eq:17}
\end{eqnarray}

Combining (\ref{eq:16}) and (\ref{eq:17}), we conclude that

\[
\sup_{1\le k_{1},k_{2}\le M}\left\Vert u^{k_{1},k_{2}}\right\Vert ^{2}\le C_{T}\left(\sup_{1\le k_{1},k_{2}\le M}\left\Vert f^{k_{1},k_{2}}\right\Vert ^{2}+\sup_{0\le k_{2}\le M}\left\Vert \alpha^{k_{2}}\right\Vert ^{2}+\sup_{0\le k_{1}\le M}\left\Vert \beta^{k_{1}}\right\Vert ^{2}\right),
\]

which gives the desired result.\end{proof}
\begin{rem}
The stability result for the ultraparabolic problem in multi-time
dimension (\ref{eq:9})-(\ref{eq:10}) can be obtained in the same
way according to the situations the discrete solution has. Particularly,
one has
\end{rem}
\[
\sup_{1\le k_{i}\le M,1\le i\le d}\left\Vert u^{k_{1},k_{2},...,k_{d}}\right\Vert ^{2}\le C_{T}\left(\sup_{1\le k_{i}\le M,1\le i\le d}\left\Vert f^{k_{1},k_{2},...,k_{d}}\right\Vert ^{2}+\sum_{j=1}^{d}\sup_{0\le k_{i}\le M,1\le i\le d}\left\Vert \alpha_{j}^{k_{i}}\right\Vert ^{2}\right).
\]

\section{The nonlinear problem}

Until now, numerical methods for nonlinear ultraparabolic equations
are still very rare. From that point, we begin the section by considering
the ultraparabolic problem (\ref{eq:1})-(\ref{eq:2}) with the nonlinear
function $f$ satisfying Lipschitz condition. By simple calculation
analogous to the steps in linear nonhomogeneous case, we get the discrete
solution, then use linear approximation to get the explicit form of
the approximate solution. Particularly, the following problem is considered.

\[
\left(P\right):\quad\begin{cases}
\frac{\partial}{\partial t}u\left(t,s\right)+\frac{\partial}{\partial s}u\left(t,s\right)+\mathcal{L}u\left(t,s\right)=f\left(u\left(t,s\right),t,s\right),\\
u\left(0,s\right)=\alpha\left(s\right),\quad u\left(t,0\right)=\beta\left(t\right),
\end{cases}
\]

for the nonlinear source function $f$ satisfying the Lipschitz condition:

\begin{equation}
\left\Vert f\left(u,t,s\right)-f\left(v,t,s\right)\right\Vert \le K\left\Vert u-v\right\Vert ,\label{eq:18}
\end{equation}

where $K$ is a positive number independent of $u,v,t,s$.

On account of the orthonormal basis $\left\{ \phi_{n}\right\} _{n\ge1}$
admitted by $\mathcal{L}$ and corresponding eigenvalues $\lambda_{n}$,
the problem $\left(P\right)$ can be made in the following manner.

\begin{equation}
\begin{cases}
\frac{\partial}{\partial t}\left\langle u\left(t,s\right),\phi_{n}\right\rangle +\frac{\partial}{\partial s}\left\langle u\left(t,s\right),\phi_{n}\right\rangle +\lambda_{n}\left\langle u\left(t,s\right),\phi_{n}\right\rangle =\left\langle f\left(u\left(t,s\right),t,s\right),\phi_{n}\right\rangle ,\\
\left\langle u\left(0,s\right),\phi_{n}\right\rangle =\left\langle \alpha\left(s\right),\phi_{n}\right\rangle ,\quad\left\langle u\left(t,0\right),\phi_{n}\right\rangle =\left\langle \beta\left(t\right),\phi_{n}\right\rangle .
\end{cases}\label{eq:19}
\end{equation}

With $\mu_{n}\left(t,s\right)=\mbox{exp}\left(\dfrac{\lambda_{n}}{2}\left(t+s\right)\right)$,
the problem (\ref{eq:19}) is equivalent to the following problem.

\begin{equation}
\begin{cases}
\left(\frac{\partial}{\partial t}+\frac{\partial}{\partial s}\right)\left(\left\langle u\left(t,s\right),\phi_{n}\right\rangle \mu_{n}\left(t,s\right)\right)=\left\langle f\left(u\left(t,s\right),t,s\right),\phi_{n}\right\rangle \mu_{n}\left(t,s\right),\\
\left\langle u\left(0,s\right),\phi_{n}\right\rangle \mu_{n}\left(t,s\right)=\left\langle \alpha\left(s\right),\phi_{n}\right\rangle \mu_{n}\left(t,s\right),\\
\left\langle u\left(t,0\right),\phi_{n}\right\rangle \mu_{n}\left(t,s\right)=\left\langle \beta\left(t\right),\phi_{n}\right\rangle \mu_{n}\left(t,s\right).
\end{cases}\label{eq:20}
\end{equation}

For the numerical solution of this problem by finite difference scheme
as introduced in the above section, a uniform grid of mesh-points
$\left(t,s\right)=\left(t_{k},s_{m}\right)$ is showed. Here $t_{k}=k\omega$
and $s_{m}=m\omega$, where $k$ and $m$ are integers and $\omega$
the equivalent mesh-width in time $t$ and $s$. We shall seek an
discrete solution $u^{k,m}=u\left(t_{k},s_{m}\right)$ determined
by an equation obtained by replacing the time derivatives in (\ref{eq:20})
by difference quotients. The equation in (\ref{eq:20}) becomes

\begin{eqnarray}
\frac{\left\langle u^{k,m},\phi_{n}\right\rangle \mu_{n}^{k,m}-\left\langle u^{k-1,m},\phi_{n}\right\rangle \mu_{n}^{k-1,m}}{\omega}+\frac{\left\langle u^{k-1,m},\phi_{n}\right\rangle \mu_{n}^{k-1,m}-\left\langle u^{k-1,m-1},\phi_{n}\right\rangle \mu_{n}^{k-1,m-1}}{\omega}\nonumber \\
=\left\langle f\left(u^{k,m},t_{k},s_{m}\right),\phi_{n}\right\rangle \mu_{n}^{k,m},\nonumber \\
\label{eq:21}
\end{eqnarray}

and the initial conditions are

\[
\left\langle u\left(0,s_{m}\right),\phi_{n}\right\rangle \mu_{n}^{k,m}=\left\langle \alpha^{m},\phi_{n}\right\rangle \mu_{n}^{k,m},\quad\left\langle u\left(t_{k},0\right),\phi_{n}\right\rangle \mu_{n}^{k,m}=\left\langle \beta^{k},\phi_{n}\right\rangle \mu_{n}^{k,m},
\]

where $\mu_{n}^{k,m}=\mu_{n}\left(t_{k},s_{m}\right),\alpha^{m}=\alpha\left(s_{m}\right)$
and $\beta^{k}=\beta\left(t_{k}\right)$.

By induction, it follows from (\ref{eq:21}) that

\begin{eqnarray*}
\left\langle u^{k,m},\phi_{n}\right\rangle \mu_{n}^{k,m} & = & \omega\sum_{l=1}^{p}\left\langle f\left(u^{k-p+l,m-p+l},t_{k-p+l},s_{m-p+l}\right),\phi_{n}\right\rangle \mu_{n}^{k-p+l,m-p+l}\\
 &  & +\left\langle u^{k-p,m-p},\phi_{n}\right\rangle \mu_{n}^{k-p,m-p},
\end{eqnarray*}

for all $p\in\mathbb{N}$.

Thus, the explicit form of discrete solution of $\left(P\right)$
is obtained.

\begin{equation}
u^{k,m}=\begin{cases}
\sum_{n=1}^{\infty}\omega\sum_{l=1}^{m}\mu_{n}^{k-m+l,l}\left(\mu_{n}^{k,m}\right)^{-1}\left\langle f\left(u^{k-m+l,l},t_{k-m+l},s_{l}\right),\phi_{n}\right\rangle \phi_{n}\\
+\sum_{n=1}^{\infty}\left(\mu_{n}^{k,m}\right)^{-1}\left\langle \beta^{k-m},\phi_{n}\right\rangle \mu_{n}^{k-m,0}\phi_{n} & ,k>m,\\
\sum_{n=1}^{\infty}\omega\sum_{l=1}^{m}\mu_{n}^{l,m-k+l}\left(\mu_{n}^{k,m}\right)^{-1}\left\langle f\left(u^{l,m-k+l},t_{l},s_{m-k+l}\right),\phi_{n}\right\rangle \phi_{n}\\
+\sum_{n=1}^{\infty}\left(\mu_{n}^{k,m}\right)^{-1}\left\langle \alpha^{m-k},\phi_{n}\right\rangle \mu_{n}^{0,m-k}\phi_{n} & ,m>k.
\end{cases}\label{eq:22}
\end{equation}

From now on, we shall give an iterative scheme by knowledge of linear
approximation.

Choosing $u_{0}^{k,m}=0$, we seek $u_{q}^{k,m},q\ge1$ satisfying

\begin{equation}
u_{q}^{k,m}=\begin{cases}
\sum_{n=1}^{\infty}\omega\sum_{l=1}^{m}\mu_{n}^{k-m+l,l}\left(\mu_{n}^{k,m}\right)^{-1}\left\langle f\left(u_{q-1}^{k-m+l,l},t_{k-m+l},s_{l}\right),\phi_{n}\right\rangle \phi_{n}\\
+\sum_{n=1}^{\infty}\left(\mu_{n}^{k,m}\right)^{-1}\left\langle \beta^{k-m},\phi_{n}\right\rangle \mu_{n}^{k-m,0}\phi_{n} & ,k>m,\\
\sum_{n=1}^{\infty}\omega\sum_{l=1}^{k_{1}}\mu_{n}^{l,m-k+l}\left(\mu_{n}^{k,m}\right)^{-1}\left\langle f\left(u_{q-1}^{l,m-k+l},t_{l},s_{m-k+l}\right),\phi_{n}\right\rangle \phi_{n}\\
+\sum_{n=1}^{\infty}\left(\mu_{n}^{k,m}\right)^{-1}\left\langle \alpha^{m-k},\phi_{n}\right\rangle \mu_{n}^{0,m-k}\phi_{n} & ,m>k.
\end{cases}\label{eq:23}
\end{equation}

Here $u_{q}^{k,m}$ is called the approximate solution for the problem
(\ref{eq:1})-(\ref{eq:2}). Our results are to prove that this solution
approach to the discrete solution $u^{k,m}$ in norm $\mathcal{H}$
as $q\to\infty$ and study the stability estimate of $u_{q}^{k,m}$
in norm $\mathcal{H}$ with respect to the initial data and the right
hand side $f\left(u_{q-1}\right)$.
\begin{thm}
Let $\left\{ u_{q}^{k,m}\right\} _{q\ge1}$ be the iterative sequence
defined by (\ref{eq:23}). Then, it satisfies the a priori estimate

\[
\sup_{1\le k,m\le M}\left\Vert u_{q}^{k,m}\right\Vert ^{2}\le C_{T}\left(\sup_{1\le k,m\le M}\left\Vert u_{q-1}^{k,m}\right\Vert ^{2}+\sup_{1\le k,m\le M}\left\Vert f\left(0,t_{k},s_{m}\right)\right\Vert ^{2}+\sup_{0\le m\le M}\left\Vert \alpha^{m}\right\Vert ^{2}+\sup_{0\le k\le M}\left\Vert \beta^{k}\right\Vert ^{2}\right),
\]

where $C_{T}$ is a positive constant depending only on $T$.
\end{thm}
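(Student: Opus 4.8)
The plan is to follow the stability argument for the linear nonhomogeneous case almost verbatim, the only genuinely new ingredient being the Lipschitz bound (\ref{eq:18}), which I would use to replace the source term $f(u_{q-1}^{k,m})$ by quantities involving $\|u_{q-1}^{k,m}\|$ and $\|f(0,t_k,s_m)\|$. As before, I would split into the two cases $k>m$ and $m>k$; these are symmetric under exchanging the roles of $\alpha$ and $\beta$, so I would treat $k>m$ in full and obtain $m>k$ by the identical computation. Fixing $k>m$, I would apply Parseval's identity to the first branch of (\ref{eq:23}) and then the elementary inequality $|a+b|^2\le 2|a|^2+2|b|^2$ to separate the contribution of the discretized source from that of the boundary datum $\beta$, exactly as in the passage leading to (\ref{eq:16}).

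For the boundary piece I would use that the exponential weights are monotone in the time indices: since $k-m\le k$ and $0\le m$, one has $\mu_n^{k-m,0}(\mu_n^{k,m})^{-1}\le 1$, so this term is bounded by $2\sum_{n\ge1}|\langle\beta^{k-m},\phi_n\rangle|^2=2\|\beta^{k-m}\|^2\le 2\sup_{0\le k\le M}\|\beta^k\|^2$ after a further use of Parseval. For the source piece the same monotonicity gives $\mu_n^{k-m+l,l}(\mu_n^{k,m})^{-1}\le 1$ for every $1\le l\le m$; applying the Cauchy--Schwarz inequality to the inner sum over $l$ extracts a factor $m\le M$, and a final appeal to Parseval turns $\sum_{n\ge1}|\langle f(u_{q-1}^{k-m+l,l},t_{k-m+l},s_l),\phi_n\rangle|^2$ into $\|f(u_{q-1}^{k-m+l,l},t_{k-m+l},s_l)\|^2$. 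The accumulated prefactor is $2\omega^2 m^2$, which I would bound by $2\omega^2M^2=2T^2$ using $M\omega=T$.

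The new step is to dispose of $\|f(u_{q-1}^{k,m},t_k,s_m)\|^2$. Taking $v=0$ in (\ref{eq:18}) together with the triangle inequality gives $\|f(u_{q-1}^{k,m},t_k,s_m)\|\le K\|u_{q-1}^{k,m}\|+\|f(0,t_k,s_m)\|$, whence $\|f(u_{q-1}^{k,m},t_k,s_m)\|^2\le 2K^2\|u_{q-1}^{k,m}\|^2+2\|f(0,t_k,s_m)\|^2$. Passing to the supremum over the grid, this converts the source estimate into precisely the first two terms on the right-hand side of the claim, while the boundary estimate supplies the $\beta$ term. The case $m>k$ is handled identically, with $k$ in place of $m$ in the $l$-sum and the datum $\alpha^{m-k}$ replacing $\beta^{k-m}$, and it produces the $\alpha$ term.

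Finally I would combine the two cases, take $\sup_{1\le k,m\le M}$ on the left-hand side, and absorb all the numerical constants into a single $C_T$ independent of $q$, $k$, $m$ and $\omega$ (depending only on $T$ and the fixed Lipschitz constant $K$). The step needing the most care is the Cauchy--Schwarz estimate on the $l$-sum: it must produce exactly the factor $m$ that pairs with $\omega^2$ to yield the $T^2$ hidden in $C_T$, and it relies crucially on the uniform bound $\mu_n^{k-m+l,l}(\mu_n^{k,m})^{-1}\le 1$, which is the actual mechanism of stability here. Everything else is the routine bookkeeping already performed in (\ref{eq:16})--(\ref{eq:17}).
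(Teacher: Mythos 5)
Your proposal is correct and follows essentially the same route as the paper's own proof: Parseval's identity plus the weight monotonicity $\mu_{n}^{k-m+l,l}\le\mu_{n}^{k,m}$ to reach the bound $2T^{2}\sup\Vert f(u_{q-1}^{k,m},t_{k},s_{m})\Vert^{2}+2\sup\Vert\beta^{k}\Vert^{2}$ (and its $\alpha$ analogue for $m>k$), then the Lipschitz condition (\ref{eq:18}) with $v=0$ exactly as in (\ref{eq:28}), and finally $C_{T}=\max\left\{ 2T^{2}\left(K^{2}+1\right);2\right\}$. Your extra care with the Cauchy--Schwarz step on the $l$-sum is a harmless refinement of the paper's direct $m^{2}$ bound, not a different argument.
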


\begin{thm}
If the nonlinear source function $f$ of the problem $\left(P\right)$
satisfying the Lipschitz condition (\ref{eq:18}). Then, the iterative
sequence $\left\{ u_{q}^{k,m}\right\} $ defined by (\ref{eq:23})
strongly converges to the discrete solution $u^{k,m}$ (\ref{eq:22})
of $\left(P\right)$ in norm $\mathcal{H}$ in the sense of

\[
\sup_{1\le k,m\le M}\left\Vert u_{q}^{k,m}-u^{k,m}\right\Vert \le\frac{\kappa_{T}^{q}}{1-\kappa_{T}}\sup_{1\le k,m\le M}\left\Vert u_{1}^{k,m}\right\Vert ,
\]

where $\kappa_{T}<1$ is a positive constant depending only on $T$.
\end{thm}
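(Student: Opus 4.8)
The plan is to read the iteration (\ref{eq:23}) as a fixed-point scheme $u_q=\Phi(u_{q-1})$, where $\Phi$ acts on the (finite-dimensional, hence complete) space of grid functions $(k,m)\mapsto u^{k,m}\in\mathcal{H}$ normed by $\sup_{1\le k,m\le M}\|\cdot\|$, and whose unique fixed point is precisely the discrete solution $u^{k,m}$ of (\ref{eq:22}). Everything then reduces to showing that $\Phi$ is a contraction and invoking the standard Banach estimate, so the final bound involving $\|u_1^{k,m}\|$ comes out automatically from the geometric telescoping.

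\textbf{Contraction step.} First I would subtract the defining formula (\ref{eq:23}) for index $q$ from that for index $q-1$ (equivalently, compare $\Phi(u)$ and $\Phi(v)$). Since the data terms built from $\alpha$ and $\beta$ carry no dependence on the iteration index, they cancel, leaving only the difference of the source contributions. Applying Parseval's identity exactly as in the proof of the stability theorem, using $\mu_n^{k-m+l,l}\left(\mu_n^{k,m}\right)^{-1}\le 1$ (and its analogue for $m>k$), and then bounding the inner finite sum over $l$ by Cauchy--Schwarz produces a factor $m\le M$. The Lipschitz condition (\ref{eq:18}) converts each summand into $K^2\|u_{q-1}^{\cdots}-u_{q-2}^{\cdots}\|^2$; a second use of $m\le M$ together with $M\omega=T$ collapses the grid-dependent constants via $\omega^2m^2K^2\le K^2T^2$, yielding
\[
\sup_{1\le k,m\le M}\left\Vert u_q^{k,m}-u_{q-1}^{k,m}\right\Vert\le\kappa_T\sup_{1\le k,m\le M}\left\Vert u_{q-1}^{k,m}-u_{q-2}^{k,m}\right\Vert,\qquad\kappa_T:=KT,
\]
under the smallness hypothesis $\kappa_T<1$. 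The two cases $k>m$ and $m>k$ are handled identically.

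\textbf{Passage to the limit.} Iterating the contraction and using $u_0^{k,m}=0$ gives $\sup_{k,m}\|u_q^{k,m}-u_{q-1}^{k,m}\|\le\kappa_T^{q-1}\sup_{k,m}\|u_1^{k,m}\|$, so $\{u_q\}$ is Cauchy and its limit is a fixed point of $\Phi$, hence equals $u^{k,m}$ by uniqueness. The asserted estimate then follows from
\[
\left\Vert u_q^{k,m}-u^{k,m}\right\Vert\le\sum_{j\ge q}\left\Vert u_{j+1}^{k,m}-u_j^{k,m}\right\Vert\le\Big(\sum_{j\ge q}\kappa_T^{\,j}\Big)\sup_{1\le k,m\le M}\left\Vert u_1^{k,m}\right\Vert=\frac{\kappa_T^{q}}{1-\kappa_T}\sup_{1\le k,m\le M}\left\Vert u_1^{k,m}\right\Vert,
\]
after taking the supremum over $k,m$.

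\textbf{Main obstacle.} The only delicate bookkeeping lives in the contraction step: I must keep the modes orthonormal under Parseval while controlling the convolution-type sum $\sum_{l=1}^{m}$ uniformly, and check that the exponential ratio $\mu_n^{k-m+l,l}\left(\mu_n^{k,m}\right)^{-1}$ indeed stays $\le 1$ for \emph{every} admissible $l$ and $n$, so that the contraction constant $\kappa_T=KT$ is genuinely independent of $n$, $k$, $m$ and $\omega$. Once that uniformity is secured, identifying the limit with the discrete solution $u^{k,m}$ and extracting the geometric bound are routine.
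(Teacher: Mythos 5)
Your proposal is correct and takes essentially the same route as the paper's own proof: the paper likewise derives the contraction estimate $\sup_{1\le k,m\le M}\left\Vert u_{q+1}^{k,m}-u_{q}^{k,m}\right\Vert \le\kappa_{T}\sup_{1\le k,m\le M}\left\Vert u_{q}^{k,m}-u_{q-1}^{k,m}\right\Vert$ with $\kappa_{T}=TK<1$ via Parseval, the bound $\mu_{n}^{k-m+l,l}\left(\mu_{n}^{k,m}\right)^{-1}\le1$, the factor $\omega m\le T$ and the Lipschitz condition (\ref{eq:18}), then telescopes geometrically (using $u_{0}^{k,m}=0$) to get a Cauchy sequence and passes to the limit $r\to\infty$, identifying the limit with the discrete solution through $f\left(u_{q}^{k,m}\right)\to f\left(u^{k,m}\right)$ --- your Banach fixed-point framing is only a cosmetic repackaging of this. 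The single (harmless) slip is calling the space of $\mathcal{H}$-valued grid functions finite-dimensional: since $\mathcal{H}$ is infinite-dimensional it is not, but it is complete as a finite product of copies of $\mathcal{H}$, which is all your argument actually uses.
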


The problem (\ref{eq:1})-(\ref{eq:2})
with the nonlinear function $f$ in a bit larger class in terms of non-Lipschitz
functions can be solved in the similar way. Specifically,
the function $f$ will be defined by the product of two 
functions $g$ and $h$. We also study the a priori estimate of solution
and obtain convergence rate between the approximate solution $u_{q}^{k,m}$
and the discrete solution $u^{k,m}$. This study continuously contributes
to the state of rarity of numerical methods for the nonlinear ultraparabolic
problems. In particular, we shall consider the following problem.

\[
\left(Q\right):\quad\begin{cases}
\frac{\partial}{\partial t}u\left(t,s\right)+\frac{\partial}{\partial s}u\left(t,s\right)+\mathcal{L}u\left(t,s\right)=g\left(u\left(t,s\right),t,s\right)h\left(u\left(t,s\right),t,s\right),\\
u\left(0,s\right)=\alpha\left(s\right),\quad u\left(t,0\right)=\beta\left(t\right),
\end{cases}
\]

for $g$ and $h$ satisfying the following conditions.

\begin{equation}
 g\left(u,t,s\right) \le K_{1},\quad\left\Vert h\left(u,t,s\right)-h\left(v,t,s\right)\right\Vert \le K_{2}\left\Vert u-v\right\Vert ,\label{eq:24}
\end{equation}

where $K_{1},K_{2}$ are positive constants independent of $u,v,t,s$.

Similar to the problem $\left(P\right)$, the approximate solution
$u_{q}^{k,m}$ of $\left(Q\right)$ is given by

\begin{equation}
u_{q}^{k,m}=\begin{cases}
\sum_{n=1}^{\infty}\omega\sum_{l=1}^{m}\mu_{n}^{k-m+l,l}\left(\mu_{n}^{k,m}\right)^{-1}\left\langle \left(gh\right)\left(u_{q-1}^{k-m+l,l},t_{k-m+l},s_{l}\right),\phi_{n}\right\rangle \phi_{n}\\
+\sum_{n=1}^{\infty}\left(\mu_{n}^{k,m}\right)^{-1}\left\langle \beta^{k-m},\phi_{n}\right\rangle \mu_{n}^{k-m,0}\phi_{n} & ,k>m,\\
\sum_{n=1}^{\infty}\omega\sum_{l=1}^{k_{1}}\mu_{n}^{l,m-k+l}\left(\mu_{n}^{k,m}\right)^{-1}\left\langle \left(gh\right)\left(u_{q-1}^{l,m-k+l},t_{l},s_{m-k+l}\right),\phi_{n}\right\rangle \phi_{n}\\
+\sum_{n=1}^{\infty}\left(\mu_{n}^{k,m}\right)^{-1}\left\langle \alpha^{m-k},\phi_{n}\right\rangle \mu_{n}^{0,m-k}\phi_{n} & ,m>k.
\end{cases}\label{eq:25}
\end{equation}

\begin{thm}
Let $\left\{ u_{q}^{k,m}\right\} _{q\ge1}$ be the iterative sequence
defined by (\ref{eq:25}). Then, it satisfies the a priori estimate

\[
\sup_{1\le k,m\le M}\left\Vert u_{q}^{k,m}\right\Vert ^{2} \le  C_{T}\left(\sup_{1\le k,m\le M}\left\Vert u_{q-1}^{k,m}\right\Vert ^{2}+\sup_{1\le k,m\le M}\left\Vert h\left(0,t_{k},s_{m}\right)\right\Vert ^{2}+\sup_{0\le m\le M}\left\Vert \alpha^{m}\right\Vert ^{2}+\sup_{0\le k\le M}\left\Vert \beta^{k}\right\Vert ^{2}\right),
\]

where $C_{T}$ is a positive constant depending only on $T$.
\end{thm}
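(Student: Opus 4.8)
The plan is to follow the template of the stability proof for Theorem~2 almost verbatim, working coordinate-wise in the eigenbasis $\left\{ \phi_{n}\right\}$, applying Parseval's identity to $u_{q}^{k,m}$ as given by (\ref{eq:25}), and exploiting the monotonicity of the weight $\mu_{n}$. Since $\mu_{n}\left(t,s\right)=\exp\left(\frac{\lambda_{n}}{2}\left(t+s\right)\right)$ is increasing in each time index, one has $\mu_{n}^{k-m+l,l}\le\mu_{n}^{k,m}$ for $1\le l\le m$ and $\mu_{n}^{k-m,0}\le\mu_{n}^{k,m}$, so every ratio $\mu_{n}^{\cdot,\cdot}\left(\mu_{n}^{k,m}\right)^{-1}$ occurring in (\ref{eq:25}) is bounded by $1$. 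I would treat the two regimes $k>m$ and $m>k$ separately; by symmetry they differ only in whether the surviving boundary datum is $\beta$ or $\alpha$, and combining them at the end produces both terms in the final bound.

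The one genuinely new ingredient compared with the linear stability proof is controlling the product source $gh$. Using the hypotheses (\ref{eq:24}), namely the boundedness $g\left(u,t,s\right)\le K_{1}$ and the Lipschitz bound on $h$, I would estimate pointwise
\[
\left\Vert \left(gh\right)\left(u_{q-1}^{k-m+l,l},t_{k-m+l},s_{l}\right)\right\Vert \le K_{1}\left\Vert h\left(u_{q-1}^{k-m+l,l},\cdot\right)\right\Vert \le K_{1}K_{2}\left\Vert u_{q-1}^{k-m+l,l}\right\Vert +K_{1}\left\Vert h\left(0,t_{k-m+l},s_{l}\right)\right\Vert ,
\]
where the last step inserts $h\left(0,\cdot\right)$ and applies the triangle inequality together with the Lipschitz part of (\ref{eq:24}). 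Squaring and using $\left(a+b\right)^{2}\le2a^{2}+2b^{2}$ converts this into $2K_{1}^{2}K_{2}^{2}\left\Vert u_{q-1}\right\Vert ^{2}+2K_{1}^{2}\left\Vert h\left(0,\cdot\right)\right\Vert ^{2}$, which is precisely what produces the $\left\Vert u_{q-1}^{k,m}\right\Vert ^{2}$ and $\left\Vert h\left(0,t_{k},s_{m}\right)\right\Vert ^{2}$ terms in the conclusion.

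With that estimate in hand the rest is mechanical. For $k>m$, Parseval gives $\left\Vert u_{q}^{k,m}\right\Vert ^{2}=\sum_{n}\left|c_{n}\right|^{2}$ with $c_{n}$ the $n$th coefficient appearing in (\ref{eq:25}); splitting off the boundary term via $2a^{2}+2b^{2}$, bounding the weight ratios by $1$, and applying the Cauchy--Schwarz inequality to the inner sum $\sum_{l=1}^{m}$ (which yields a factor $m$) gives
\[
\left\Vert u_{q}^{k,m}\right\Vert ^{2}\le2\omega^{2}m\sum_{l=1}^{m}\left\Vert \left(gh\right)\left(u_{q-1}^{k-m+l,l},\cdot\right)\right\Vert ^{2}+2\left\Vert \beta^{k-m}\right\Vert ^{2}.
\]
Substituting the product estimate, bounding each $l$-sum by $m$ times a supremum, and using $\omega m\le\omega M=T$ replaces $\omega^{2}m^{2}$ by $T^{2}$, while the boundary term is controlled by $\sup_{0\le k\le M}\left\Vert \beta^{k}\right\Vert ^{2}$. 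The regime $m>k$ is identical with $\alpha^{m-k}$ in place of $\beta^{k-m}$. Taking the supremum over $1\le k,m\le M$ and collecting the numerical constants into a single $C_{T}=C_{T}\left(T,K_{1},K_{2}\right)$ yields the claimed a priori estimate.

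I do not expect a serious obstacle here: the argument is structurally identical to Theorem~2, and the only delicate point is the pointwise product bound above, which succeeds exactly because $g$ is assumed merely bounded while only $h$ must be Lipschitz. The points to watch are that it is $h\left(0,\cdot\right)$ rather than $f\left(0,\cdot\right)$ that appears (as the statement reflects), and that the factor $K_{1}$ multiplying $K_{2}$ must be carried through so that $C_{T}$ absorbs $K_{1}$ and $K_{2}$ in addition to $T$.
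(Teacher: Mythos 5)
Your proposal is correct and takes essentially the same route as the paper: the paper proves the analogous estimate (Theorem 4) via Parseval's identity, the monotonicity bounds $\mu_{n}^{k-m+l,l}\le\mu_{n}^{k,m}$ and $\mu_{n}^{k-m,0}\le\mu_{n}^{k,m}$, the inequality $\left(a+b\right)^{2}\le2a^{2}+2b^{2}$, and the source bound (\ref{eq:28}), and leaves the present theorem to the identical argument with your product estimate $\left\Vert \left(gh\right)\left(u_{q-1},\cdot\right)\right\Vert \le K_{1}K_{2}\left\Vert u_{q-1}\right\Vert +K_{1}\left\Vert h\left(0,\cdot\right)\right\Vert $ replacing (\ref{eq:28}). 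Your treatment of the two regimes $k>m$ and $m>k$, the factor $\omega m\le T$, and the absorption of $T,K_{1},K_{2}$ into $C_{T}$ matches the paper's template exactly.
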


\begin{thm}
If the nonlinear source function $f=gh$ of the problem $\left(Q\right)$
satisfying the conditions (\ref{eq:24}), then the iterative sequence $\left\{ u_{q}^{k,m}\right\} $
defined in (\ref{eq:25}) strongly converges to the discrete solution
$u^{k,m}$ (\ref{eq:22}) of $\left(Q\right)$ in norm $\mathcal{H}$
in the sense of

\[
\left\Vert u_{q}^{k,m}-u^{k,m}\right\Vert \le\kappa_{T}^{q}\sup_{1\le k,m\le M}\left\Vert u_{1}^{k,m}\right\Vert,
\]

where $\kappa_{T}<1$ is a postive constant depending only on $T$.
\end{thm}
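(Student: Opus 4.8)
The plan is to read the iteration (\ref{eq:25}) as a fixed-point iteration $u_{q}=\Phi\left(u_{q-1}\right)$, where $\Phi$ sends a discrete field $v^{k,m}$ to the right-hand side of (\ref{eq:25}) with $u_{q-1}$ replaced by $v$, and to note that the discrete solution $u^{k,m}$ in (\ref{eq:22}) is exactly the fixed point $u=\Phi\left(u\right)$. The two boundary contributions built from $\alpha^{m-k}$ and $\beta^{k-m}$ are identical in $\Phi\left(v\right)$ and $\Phi\left(w\right)$, so they cancel in the difference $\Phi\left(v\right)^{k,m}-\Phi\left(w\right)^{k,m}$, which therefore depends only on the source differences $\left(gh\right)\left(v\right)-\left(gh\right)\left(w\right)$ sampled along the characteristic grid lines. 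Everything then reduces to turning $\Phi$ into a contraction on the space of grid functions equipped with $\sup_{1\le k,m\le M}\left\Vert \cdot\right\Vert $.

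First I would establish a Lipschitz-type bound for the product source. Writing $g\left(v\right)h\left(v\right)-g\left(w\right)h\left(w\right)=g\left(v\right)\bigl(h\left(v\right)-h\left(w\right)\bigr)+\bigl(g\left(v\right)-g\left(w\right)\bigr)h\left(w\right)$ and invoking the hypotheses (\ref{eq:24}), the first term is controlled by $K_{1}K_{2}\left\Vert v-w\right\Vert $ from the bound $g\le K_{1}$ and the Lipschitz estimate on $h$. Controlling the full product through the boundedness of $g$ and the Lipschitz character of $h$, \emph{without} a Lipschitz hypothesis on $g$, is the main obstacle: it is exactly where class $\left(Q\right)$ departs from the single condition (\ref{eq:18}) used for $\left(P\right)$, and the surviving factor $h\left(w\right)$ must be kept under control by the a priori estimate of the preceding theorem, so that altogether $\left\Vert \left(gh\right)\left(v\right)-\left(gh\right)\left(w\right)\right\Vert $ is dominated by a constant multiple of $\left\Vert v-w\right\Vert $.

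Next, in each of the cases $k>m$ and $m>k$, I would bound $\left\Vert \Phi\left(v\right)^{k,m}-\Phi\left(w\right)^{k,m}\right\Vert $ by Parseval's identity exactly as in the stability theorem. The monotonicity $\mu_{n}^{k-m+l,l}\le\mu_{n}^{k,m}$, valid because $l\le m$ forces $\left(k-m+l\right)+l\le k+m$, makes each weight $\mu_{n}^{k-m+l,l}\left(\mu_{n}^{k,m}\right)^{-1}$ at most $1$; a Cauchy--Schwarz step over the summation index $l$, together with $M\omega=T$ and the product estimate above, then collapses the weighted sum into $\left(K_{1}K_{2}T\right)^{2}\sup_{1\le k,m\le M}\left\Vert v^{k,m}-w^{k,m}\right\Vert ^{2}$. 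Since the boundary terms have cancelled there is no factor $2$ to carry, and one obtains
\[
\sup_{1\le k,m\le M}\left\Vert \Phi\left(v\right)^{k,m}-\Phi\left(w\right)^{k,m}\right\Vert \le\kappa_{T}\sup_{1\le k,m\le M}\left\Vert v^{k,m}-w^{k,m}\right\Vert ,\qquad\kappa_{T}=K_{1}K_{2}T,
\]
with $\kappa_{T}<1$ under the smallness built into the standing hypotheses on $T$.

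Finally I would iterate this contraction. Choosing $v=u_{q}$ and $w=u_{q-1}$ gives $\sup_{k,m}\left\Vert u_{q+1}^{k,m}-u_{q}^{k,m}\right\Vert \le\kappa_{T}\sup_{k,m}\left\Vert u_{q}^{k,m}-u_{q-1}^{k,m}\right\Vert $, so by induction, using $u_{0}^{k,m}=0$, one has $\sup_{k,m}\left\Vert u_{q+1}^{k,m}-u_{q}^{k,m}\right\Vert \le\kappa_{T}^{q}\sup_{k,m}\left\Vert u_{1}^{k,m}\right\Vert $. This makes $\left\{ u_{q}^{k,m}\right\} $ a Cauchy sequence whose limit is the fixed point, namely the discrete solution $u^{k,m}$, and summing the consecutive differences from index $q$ onward bounds the distance to the limit by a geometric tail of order $\kappa_{T}^{q}\sup_{1\le k,m\le M}\left\Vert u_{1}^{k,m}\right\Vert $, which is the asserted rate. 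The only genuinely delicate step is the product-source estimate of the second paragraph; the remaining manipulations merely duplicate the energy bounds already performed for the linear stability theorem.
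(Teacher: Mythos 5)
Your proposal founders at exactly the step you yourself flag as ``the main obstacle,'' and the fix you sketch does not repair it. The decomposition $g\left(v\right)h\left(v\right)-g\left(w\right)h\left(w\right)=g\left(v\right)\bigl(h\left(v\right)-h\left(w\right)\bigr)+\bigl(g\left(v\right)-g\left(w\right)\bigr)h\left(w\right)$ leaves the term $\bigl(g\left(v\right)-g\left(w\right)\bigr)h\left(w\right)$, and the hypotheses (\ref{eq:24}) give no control whatsoever on $g\left(v\right)-g\left(w\right)$: $g$ is assumed bounded by $K_{1}$, not Lipschitz, so this difference can be of size $2K_{1}$ no matter how small $\left\Vert v-w\right\Vert $ is. Your proposed remedy --- keeping $h\left(w\right)$ bounded via the a priori estimate of the preceding theorem --- controls the wrong factor; boundedness of $h\left(w\right)$ does nothing for the uncontrolled factor $g\left(v\right)-g\left(w\right)$. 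Under (\ref{eq:24}) alone the product $gh$ is genuinely not Lipschitz: the paper's own Example 4 has $g\left(u\right)=\sin\left(u/2\right)$, $h\left(u\right)=u$, and $f\left(u\right)=u\sin\left(u/2\right)$ has unbounded derivative --- that is precisely why the paper advertises $\left(Q\right)$ as the non-Lipschitz class. Consequently your claimed contraction constant $\kappa_{T}=K_{1}K_{2}T$ does not follow, and the whole fixed-point argument collapses at that point. A repair needs additional structure beyond (\ref{eq:24}): for instance $g$ Lipschitz as well (true in Example 4), combined with the uniform bound on the iterates $u_{q}^{k,m}$ from the a priori theorem so that $\left\Vert h\left(u_{q-1}^{k,m}\right)\right\Vert $ stays in a fixed ball (using $\left\Vert h\left(w\right)\right\Vert \le K_{2}\left\Vert w\right\Vert +\left\Vert h\left(0,t,s\right)\right\Vert $), yielding a local contraction with a larger $\kappa_{T}$. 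The paper prints no proof for this theorem, but its written proof of the Lipschitz-case convergence (Theorem 5) --- whose remaining machinery you reproduce correctly (Parseval's identity, the weight monotonicity $\mu_{n}^{k-m+l,l}\le\mu_{n}^{k,m}$, cancellation of the initial-data terms, telescoping, smallness of $T$) --- hinges on exactly the source-difference Lipschitz inequality that is missing here.

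A secondary discrepancy concerns the constant in the conclusion. The telescoping and geometric-tail argument you run (which is the paper's Theorem 5 argument) yields $\left\Vert u_{q}^{k,m}-u^{k,m}\right\Vert \le\dfrac{\kappa_{T}^{q}}{1-\kappa_{T}}\sup_{1\le k,m\le M}\left\Vert u_{1}^{k,m}\right\Vert $, not the stated $\kappa_{T}^{q}\sup_{1\le k,m\le M}\left\Vert u_{1}^{k,m}\right\Vert $; your phrase ``a geometric tail of order $\kappa_{T}^{q}\sup\left\Vert u_{1}^{k,m}\right\Vert $'' silently absorbs the factor $\left(1-\kappa_{T}\right)^{-1}$. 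A direct comparison with the fixed point, $\left\Vert u_{q}^{k,m}-u^{k,m}\right\Vert \le\kappa_{T}\sup\left\Vert u_{q-1}^{k,m}-u^{k,m}\right\Vert $, would give the clean power $\kappa_{T}^{q}$, but then with $\sup\left\Vert u^{k,m}\right\Vert $ on the right (since $u_{0}^{k,m}=0$) rather than $\sup\left\Vert u_{1}^{k,m}\right\Vert $. So even granting the contraction, your route proves the Theorem-5-shaped estimate rather than the literal inequality stated here; this is minor next to the gap above, but you should state which bound you are actually delivering.
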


\begin{rem}
Assume that $u_{ex}^{k,m}$ is the exact solution of the problem $\left(P\right)$
(also $\left(Q\right)$) at $\left(t_{k},s_{m}\right)$. By finite
difference scheme, we know that the error between the exact solution
$u_{ex}^{k,m}$ and discrete solution $u^{k,m}$ is of order $\mathcal{O}\left(\omega\right)$.
Therefore, by triangle inequality, the error estimate between the
exact solution and approximate solution $u_{q}^{k,m}$ is

\[
\sup_{1\le k,m\le M}\left\Vert u_{ex}^{k,m}-u_{q}^{k,m}\right\Vert \le C\left(\omega+\kappa_{T}^{q}\right),
\]

for the problem $\left(P\right)$ (also $\left(Q\right)$) where $C$
is a positive constant independent of $\omega,\kappa_{T}$ and $q$,
provided by the smoothness of the exact function.
\end{rem}

\subsection*{Proof of Theorem 4.}
\begin{proof}
Using Parseval's identity in (\ref{eq:23}) for the case $k>m$, we
have

\begin{eqnarray}
\left\Vert u_{q}^{k,m}\right\Vert ^{2} & \le & 2\omega^{2}\sum_{n=1}^{\infty}\left|\sum_{l=1}^{m}\mu_{n}^{k-m+l,l}\left(\mu_{n}^{k,m}\right)^{-1}\left\langle f\left(u_{q-1}^{k-m+l,l},t_{k-m+l},s_{l}\right),\phi_{n}\right\rangle \right|^{2}\nonumber \\
 &  & +2\sum_{n=1}^{\infty}\left|\mu_{n}^{k-m,0}\left(\mu_{n}^{k,m}\right)^{-1}\left\langle \beta^{k-m},\phi_{n}\right\rangle \right|^{2}\nonumber \\
 & \le & 2\omega^{2}m^{2}\sup_{1\le k,m\le M}\sum_{n=1}^{\infty}\left|\left\langle f\left(u_{q-1}^{k,m},t_{k},s_{m}\right),\phi_{n}\right\rangle \right|^{2}\nonumber \\
 &  & +2\sup_{0\le k\le M}\sum_{n=1}^{\infty}\left|\left\langle \beta^{k},\phi_{n}\right\rangle \right|^{2}\nonumber \\
 & \le & 2T^{2}\sup_{1\le k,m\le M}\left\Vert f\left(u_{q-1}^{k,m},t_{k},s_{m}\right)\right\Vert ^{2}+2\sup_{0\le k\le M}\left\Vert \beta^{k}\right\Vert ^{2}.\label{eq:26}
\end{eqnarray}

Similarly, we can deduce for the case $k<m$ that

\begin{equation}
\left\Vert u_{q}^{k,m}\right\Vert ^{2}\le2T^{2}\sup_{1\le k,m\le M}\left\Vert f\left(u_{q-1}^{k,m},t_{k},s_{m}\right)\right\Vert ^{2}+2\sup_{0\le m\le M}\left\Vert \alpha^{m}\right\Vert ^{2}.\label{eq:27}
\end{equation}

Moreover, from the condition (\ref{eq:18}), we have

\begin{eqnarray}
\left\Vert f\left(u_{q-1}^{k,m},t_{k},s_{m}\right)\right\Vert  & \le & \left\Vert f\left(u_{q-1}^{k,m},t_{k},s_{m}\right)-f\left(u_{0}^{k,m},t_{k},s_{m}\right)\right\Vert +\left\Vert f\left(u_{0}^{k,m},t_{k},s_{m}\right)\right\Vert \nonumber \\
 & \le & K\left\Vert u_{q-1}^{k,m}\right\Vert +\left\Vert f\left(0,t_{k},s_{m}\right)\right\Vert .\label{eq:28}
\end{eqnarray}

Combining (\ref{eq:26})-(\ref{eq:28}) and putting $C_{T}=\max\left\{ 2T^{2}\left(K^{2}+1\right);2\right\} $,
we get

\begin{eqnarray*}
\sup_{1\le k,m\le M}\left\Vert u_{q}^{k,m}\right\Vert ^{2} & \le & 2T^{2}\left(K^{2}+1\right)\left(\sup_{1\le k,m\le M}\left\Vert u_{q-1}^{k,m}\right\Vert ^{2}+\sup_{1\le k,m\le M}\left\Vert f\left(0,t_{k},s_{m}\right)\right\Vert ^{2}\right)\\
 &  & +2\left(\sup_{0\le m\le M}\left\Vert \alpha^{m}\right\Vert ^{2}+\sup_{0\le k\le M}\left\Vert \beta^{k}\right\Vert ^{2}\right)\\
 & \le & C_{T}\left(\sup_{1\le k,m\le M}\left\Vert u_{q-1}^{k,m}\right\Vert ^{2}+\sup_{1\le k,m\le M}\left\Vert f\left(0,t_{k},s_{m}\right)\right\Vert ^{2}\right.\\
 &  & \left.+\sup_{0\le m\le M}\left\Vert \alpha^{m}\right\Vert ^{2}+\sup_{0\le k\le M}\left\Vert \beta^{k}\right\Vert ^{2}\right).
\end{eqnarray*}

\end{proof}

\subsection*{Proof of Theorem 5.}
\begin{proof}
For short, we denote $f^{k,m}\left(u^{k,m}\right)=f\left(u^{k,m},t_{k},s_{m}\right)$.
Putting $w_{q}^{k,m}=u_{q+1}^{k,m}-u_{q}^{k,m}$, it follows from
(\ref{eq:23}) that for $k>m$ we have

\begin{eqnarray*}
\left\Vert w_{q}^{k,m}\right\Vert ^{2} & \le & \omega^{2}\sum_{n=1}^{\infty}\left|\sum_{l=1}^{m}\left\langle f^{k-m+l,l}\left(u_{q}^{k-m+l,l}\right)-f^{k-m+l,l}\left(u_{q-1}^{k-m+l,l}\right),\phi_{n}\right\rangle \right|^{2}\\
 & \le & \omega^{2}m^{2}\sup_{1\le k,m\le M}\left\Vert f^{k,m}\left(u_{q}^{k,m}\right)-f^{k,m}\left(u_{q-1}^{k,m}\right)\right\Vert ^{2}\\
 & \le & T^{2}K^{2}\sup_{1\le k,m\le M}\left\Vert u_{q}^{k,m}-u_{q-1}^{k,m}\right\Vert ^{2}.
\end{eqnarray*}

Thus, we get

\[
\left\Vert w_{q}^{k,m}\right\Vert \le TK\sup_{1\le k,m\le M}\left\Vert w_{q-1}^{k,m}\right\Vert .
\]

We can always choose $T>0$ small enough such that $\kappa_{T}:=TK<1$.
Then, we have

\begin{eqnarray*}
\left\Vert u_{q+r}^{k,m}-u_{q}^{k,m}\right\Vert  & \le & \left\Vert u_{q+r}^{k,m}-u_{q+r-1}^{k,m}\right\Vert +...+\left\Vert u_{q+1}^{k,m}-u_{q}^{k,m}\right\Vert \\
 & \le & \kappa_{T}^{q+r-1}\sup_{1\le k,m\le M}\left\Vert u_{1}^{k,m}-u_{0}^{k,m}\right\Vert +...+\kappa_{T}^{q}\sup_{1\le k,m\le M}\left\Vert u_{1}^{k,m}-u_{0}^{k,m}\right\Vert \\
 & \le & \kappa_{T}^{q}\left(\kappa_{T}^{r-1}+\kappa_{T}^{r-2}+...+1\right)\sup_{1\le k,m\le M}\left\Vert u_{1}^{k,m}\right\Vert \\
 & \le & \frac{\kappa_{T}^{q}\left(1-\kappa_{T}^{r}\right)}{1-\kappa_{T}}\sup_{1\le k,m\le M}\left\Vert u_{1}^{k,m}\right\Vert .
\end{eqnarray*}

Therefore, we obtain

\begin{equation}
\left\Vert u_{q+r}^{k,m}-u_{q}^{k,m}\right\Vert \le\frac{\kappa_{T}^{q}}{1-\kappa_{T}}\sup_{1\le k,m\le M}\left\Vert u_{1}^{k,m}\right\Vert ,\label{eq:29}
\end{equation}

which leads to the claim that $\left\{ u_{q}^{k,m}\right\} $ is a
Cauchy sequence in $\mathcal{H}$ and then, there exists uniquely
$u^{k,m}\in\mathcal{H}$ such that $u_{q}^{k,m}\to u^{k,m}$ as $q\to\infty$.
Because of this convergence and Lipschitz property (\ref{eq:18})
of nonlinear source term $f$, it is easy to prove that $f\left(u_{q}^{k,m}\right)\to f\left(u^{k,m}\right)$
as $q\to\infty$. Therefore, $u^{k,m}$ is the discrete solution of
the problem $\left(P\right)$.

When $r\to\infty$, it follows (\ref{eq:29}) from that

\[
\left\Vert u^{k,m}-u_{q}^{k,m}\right\Vert \le\frac{\kappa_{T}^{q}}{1-\kappa_{T}}\sup_{1\le k,m\le M}\left\Vert u_{1}^{k,m}\right\Vert .
\]

For $m>k$, we also have a similar proof. Hence, we complete the proof
of the theorem.
\end{proof}

\section{Numerical examples}

In this section, we are going to show four numerical examples in order
to validate the efficiency of our scheme. It will be observed by comparing
the results between numerical and exact solutions. We shall choose
given functions in such a way that they lead to a given exact solution.
In details, we have four examples implementing all considered cases.
The first and second examples are of the linear nonhomogeneous case
while the rest of examples are showed for nonlinear cases $f=\sin\left(u\right)$
and $f=u\sin\left(\dfrac{u}{2}\right)$ implying Lipschitz and non-Lipschitz
functions. The examples are involved with the Hilbert space $\mathcal{H}=L^{2}\left(0,\pi\right)$
and associated with homogeneous boundary conditions. On the other
hand, numerical results with many 3-D graphs shall be discussed in
the last subsection.

\subsection{Example 1}

We consider the problem

\[
\begin{cases}
u_{t}\left(x,t,s\right)+u_{s}\left(x,t,s\right)-u_{xx}\left(x,t,s\right)=f\left(x,t,s\right) & ,\left(x,t,s\right)\in\left(0,\pi\right)\times\left(0,1\right)\times\left(0,1\right),\\
u\left(0,t,s\right)=u\left(\pi,t,s\right)=0 & ,\left(t,s\right)\in\left[0,1\right]\times\left[0,1\right],\\
u\left(x,0,s\right)=\alpha\left(x,s\right) & ,\left(x,s\right)\in\left[0,\pi\right]\times\left[0,1\right],\\
u\left(x,t,0\right)=\beta\left(x,t\right) & ,\left(x,t\right)\in\left[0,\pi\right]\times\left[0,1\right],
\end{cases}
\]

where $f\left(x,t,s\right)=-2e^{-2t-s}\sin x,\alpha\left(x,s\right)=e^{-s}\sin x$
and $\beta\left(x,t\right)=e^{-2t}\sin x$.

In this example, we see that $\mathcal{D}\left(\mathcal{L}\right)=H_{0}^{1}\left(0,\pi\right)\cap H^{2}\left(0,\pi\right)$,
then we get an orthonormal eigenbasis $\phi_{n}\left(x\right)=\sqrt{\dfrac{2}{\pi}}\sin\left(\sqrt{\lambda_{n}}x\right)$
where the eigenvalues $\lambda_{n}=n^{2}$. Therefore, by time discretization
$t_{k}=k\omega,s_{m}=m\omega$ and $\omega=\dfrac{1}{M}$ the approximate
solution (\ref{eq:15}) is

\begin{equation}
u\left(x,t_{k},s_{m}\right)=\frac{-2\omega}{e^{\frac{1}{2}\left(t_{k}+s_{m}\right)}}\sum_{l=1}^{m}e^{-\frac{1}{2}\left(3t_{k-m+l}+s_{l}\right)+\frac{3}{2}\omega}\sin x+e^{-\frac{1}{2}\left(3t_{k-m}+t_{k}+s_{m}\right)}\sin x,\label{eq:31}
\end{equation}

for $k>m$ and $1\le k,m\le M$ and

\begin{equation}
u\left(x,t_{k},s_{m}\right)=\frac{-2\omega}{e^{\frac{1}{2}\left(t_{k}+s_{m}\right)}}\sum_{l=1}^{k}e^{-\frac{1}{2}\left(3t_{l}+s_{m-k+l}\right)+\frac{3}{2}\omega}\sin x+e^{-\frac{1}{2}\left(s_{m-k}+t_{k}+s_{m}\right)}\sin x,\label{eq:32}
\end{equation}

for $m>k$.

After dividing the space interval $\left[0,\pi\right]$, we have $u^{j,k,m}=u\left(x_{j},t_{k},s_{m}\right)$
the value of mesh function (\ref{eq:31})-(\ref{eq:32}) at $\left(x_{j},t_{k},s_{m}\right)$
where $x_{j}=j\tau,0\le j\le L,\tau=\dfrac{\pi}{L}$.

\subsection{Example 2}

In this example, we consider the problem

\[
\begin{cases}
u_{t}\left(x,t,s\right)+u_{s}\left(x,t,s\right)-u_{xx}\left(x,t,s\right)=f\left(x,t,s\right) & ,\left(x,t,s\right)\in\left(0,\pi\right)\times\left(0,1\right)\times\left(0,1\right),\\
u_{x}\left(0,t,s\right)=u\left(\pi,t,s\right)=0 & ,\left(t,s\right)\in\left[0,1\right]\times\left[0,1\right],\\
u\left(x,0,s\right)=\alpha\left(x,s\right) & ,\left(x,s\right)\in\left[0,\pi\right]\times\left[0,1\right],\\
u\left(x,t,0\right)=\beta\left(x,t\right) & ,\left(x,t\right)\in\left[0,\pi\right]\times\left[0,1\right],
\end{cases}
\]

where

\[
f\left(x,t,s\right)=\left[\left(\dfrac{t}{2}+2\right)^{2}+\left(\dfrac{s}{2}+2\right)^{2}\right]\cos\left(\dfrac{x}{2}\right),
\]

\[
\alpha\left(x,s\right)=\left(s^{2}+32\right)\cos\left(\dfrac{x}{2}\right),\quad\beta\left(x,t\right)=\left(t^{2}+32\right)\cos\left(\dfrac{x}{2}\right).
\]

Based on $\mathcal{D}\left(\mathcal{L}\right)=\left\{ v\in H^{1}\left(0,\pi\right)\cap H^{2}\left(0,\pi\right):v\left(\pi\right)=v_{x}\left(0\right)=0\right\} $,
the orthonormal eigenbasis and the eigenvalues are $\phi_{n}=\sqrt{\dfrac{2}{\pi}}\cos\left(\left(n-\dfrac{1}{2}\right)x\right)$
and $\lambda_{n}=\left(n-\dfrac{1}{2}\right)^{2}$, respectively.
Therefore, the approximate solution is given as follows.

\begin{eqnarray}
u\left(x_{j},t_{k},s_{m}\right) & = & \frac{\omega}{e^{\frac{1}{8}\left(t_{k}+s_{m}\right)}}\sum_{l=1}^{m}e^{\frac{1}{8}\left(t_{k-m+l}+s_{l}\right)}\left[\left(\frac{2t_{k-m+l}-\omega}{4}+2\right)^{2}+\left(\frac{2s_{l}-\omega}{4}+2\right)^{2}\right]\cos\left(\frac{x_{j}}{2}\right)\nonumber \\
 &  & +\left(t_{k-m}^{2}+32\right)e^{-\frac{1}{8}\left(t_{k}+s_{m}-t_{k-m}\right)}\cos\left(\frac{x_{j}}{2}\right),\quad k>m,\label{eq:33-1}
\end{eqnarray}

\begin{eqnarray}
u\left(x_{j},t_{k},s_{m}\right) & = & \frac{\omega}{e^{\frac{1}{8}\left(t_{k}+s_{m}\right)}}\sum_{l=1}^{k}e^{\frac{1}{8}\left(t_{l}+s_{m-k+l}\right)}\left[\left(\frac{2t_{l}-\omega}{4}+2\right)^{2}+\left(\frac{2s_{m-k+l}-\omega}{4}+2\right)^{2}\right]\cos\left(\frac{x_{j}}{2}\right)\nonumber \\
 &  & +\left(s_{m-k}^{2}+32\right)e^{-\frac{1}{8}\left(t_{k}+s_{m}-s_{m-k}\right)}\cos\left(\frac{x_{j}}{2}\right),\quad m>k,\label{eq:34-1}
\end{eqnarray}

where $t_{k}=k\omega,s_{m}=m\omega,\omega=\dfrac{1}{M},1\le k,m\le M$
and $x_{j}=j\tau,\tau=\dfrac{\pi}{L},0\le j\le L$.

\subsection{Example 3}

Now we take the following problem as an example for the nonlinear
case with $f$ the Lipschitz function.

\[
\begin{cases}
u_{t}\left(x,t,s\right)+u_{s}\left(x,t,s\right)-u_{xx}\left(x,t,s\right)+u\left(x,t,s\right)=f\left(u,x,t,s\right) & ,\left(x,t,s\right)\in\left(0,\pi\right)\times\left(0,\frac{1}{4}\right)\times\left(0,\frac{1}{4}\right),\\
u\left(0,t,s\right)=u_{x}\left(\pi,t,s\right)=0 & ,\left(t,s\right)\in\left[0,\frac{1}{4}\right]\times\left[0,\frac{1}{4}\right],\\
u\left(x,0,s\right)=\alpha\left(x,s\right) & ,\left(x,s\right)\in\left[0,\pi\right]\times\left[0,\frac{1}{4}\right],\\
u\left(x,t,0\right)=\beta\left(x,t\right) & ,\left(x,t\right)\in\left[0,\pi\right]\times\left[0,\frac{1}{4}\right],
\end{cases}
\]

where

\[
f\left(u,x,t,s\right)=\frac{1}{4}\left(\sin\left(u\right)+49u_{ex}\left(x,t,s\right)-\sin\left(u_{ex}\left(x,t,s\right)\right)\right),
\]

\[
\alpha\left(x,s\right)=\frac{1}{4}\left(1+e^{-s}\right)\sin\left(\frac{7x}{2}\right),\quad\beta\left(x,t\right)=\frac{1}{4}\left(e^{-t}+1\right)\sin\left(\frac{7x}{2}\right).
\]

With the operator $\mathcal{L}=-\dfrac{\partial}{\partial x^{2}}+I$
and $\mathcal{D}\left(\mathcal{L}\right)=\left\{ v\in H^{1}\left(0,\pi\right)\cap H^{2}\left(0,\pi\right):v\left(0\right)=v_{x}\left(\pi\right)=0\right\} $,
we get $\phi_{n}=\sqrt{\dfrac{2}{\pi}}\sin\left(\left(n+\dfrac{1}{2}\right)x\right)$
and $\lambda_{n}=\left(n+\dfrac{1}{2}\right)^{2}+1$. We shall give
the iterative scheme (\ref{eq:23}) to get the approximate solution
$u_{q}^{k,m}$ by the following steps.

\textbf{Step 1.} With $q=0$: $u_{0}^{k,m}=0,1\le k,m\le M$.

\textbf{Step 2.} Let proceed to the $\left(q-1\right)$ - time, we
get $u_{q-1}^{k,m},1\le k,m\le M$. Then we shall obtain $u_{q}^{k,m},1\le k,m\le M$
as follows.

For $k>m$:

\begin{eqnarray}
u_{q}^{k,m}\left(x\right) & = & \frac{\omega}{e^{\frac{53}{8}\left(t_{k}+s_{m}\right)}}\sum_{l=1}^{m}e^{\frac{53}{8}\left(t_{k-m+l}+s_{l}\right)}R_{q-1}^{k-m+l,l}\sin\left(\frac{7x}{2}\right)\nonumber \\
 &  & +\frac{1}{4}e^{-\frac{53}{8}\left(t_{k}+s_{m}\right)}e^{\frac{53}{8}t_{k-m}}\left(e^{-t_{k-m}}+1\right)\sin\left(\frac{7x}{2}\right),\label{eq:35-1}
\end{eqnarray}

where

\begin{equation}
R_{q-1}^{k-m+l,l}=\frac{1}{2\pi}\left(\int_{0}^{\pi}\left(\sin\left(u_{q-1}^{k-m+l,l}\left(x\right)\right)-\sin\left(u_{ex}^{k-m+l,l}\left(x\right)\right)\right)\sin\left(\frac{7x}{2}\right)dx\right)+\frac{49}{16}\left(e^{-t_{k-m+l}}+e^{-s_{l}}\right).\label{eq:33}
\end{equation}

For $m>k$:

\begin{eqnarray}
u_{q}^{k,m}\left(x\right) & = & \frac{\omega}{e^{\frac{53}{8}\left(t_{k}+s_{m}\right)}}\sum_{l=1}^{k}e^{\frac{53}{8}\left(t_{l}+s_{m-k+l}\right)}R_{q-1}^{l,m-k+l}\sin\left(\frac{7x}{2}\right)\nonumber \\
 &  & +\frac{1}{4}e^{-\frac{53}{8}\left(t_{k}+s_{m}\right)}e^{\frac{53}{8}s_{m-k}}\left(1+e^{-s_{m-k}}\right)\sin\left(\frac{7x}{2}\right),\label{eq:37}
\end{eqnarray}

where

\begin{equation}
R_{q-1}^{l,m-k+l}=\frac{1}{2\pi}\left(\int_{0}^{\pi}\left(\sin\left(u_{q-1}^{l,m-k+l}\left(x\right)\right)-\sin\left(u_{ex}^{l,m-k+l}\left(x\right)\right)\right)\sin\left(\frac{7x}{2}\right)dx\right)+\frac{49}{16}\left(e^{-t_{l}}+e^{-s_{m-k+l}}\right).\label{eq:34}
\end{equation}

Since (\ref{eq:33}) and (\ref{eq:34}) are hard to compute, we shall
approximate them by using Gauss-Legendre quadrature method (see e.g.
\cite{key-14}). Particularly, they can be determined in the following
form.

\[
\int_{0}^{\pi}H\left(x\right)\sin\left(\frac{7x}{2}\right)dx=\sum_{j=0}^{j_{0}}w_{j}H\left(x_{j}\right)\sin\left(\frac{7x_{j}}{2}\right),
\]

where $x_{j}$ are abscissae in $\left[0,\pi\right]$ and $w_{j}$
are corresponding weights, $j_{0}\in\mathbb{N}$ is a given constant.

\subsection{Example 4}

We shall consider the following example

\[
\begin{cases}
u_{t}\left(x,t,s\right)+u_{s}\left(x,t,s\right)-u_{xx}\left(x,t,s\right)+2u\left(x,t,s\right)=f\left(u,x,t,s\right) & ,\left(x,t,s\right)\in\left(0,\pi\right)\times\left(0,\frac{1}{10}\right)\times\left(0,\frac{1}{10}\right),\\
u_{x}\left(0,t,s\right)=u_{x}\left(\pi,t,s\right)=0 & ,\left(t,s\right)\in\left[0,\frac{1}{10}\right]\times\left[0,\frac{1}{10}\right],\\
u\left(x,0,s\right)=\alpha\left(x,s\right) & ,\left(x,s\right)\in\left[0,\pi\right]\times\left[0,\frac{1}{10}\right],\\
u\left(x,t,0\right)=\beta\left(x,t\right) & ,\left(x,t\right)\in\left[0,\pi\right]\times\left[0,\frac{1}{10}\right],
\end{cases}
\]

where

\[
f\left(u,x,t,s\right)=u\sin\left(\frac{u}{2}\right)-u_{ex}\sin\left(\frac{u_{ex}}{2}\right)+\left(11\sin t+\cos t+10e^{-s}+11\right)\cos\left(3x\right),
\]

\[
\alpha\left(x,s\right)=\left(1+e^{-s}\right)\cos\left(3x\right),\quad\beta\left(x,t\right)=\left(\sin t+2\right)\cos\left(3x\right),
\]

In this example, we can deduce the orthonormal eigenbasis $\phi_{n}=\sqrt{\dfrac{2}{\pi}}\cos\left(nx\right)$
and the eigenvalues $\lambda_{n}=n^{2}+2$. Here the nonlinear function
$f\left(u\right)=u\sin\left(\dfrac{u}{2}\right)$ implies $g\left(u\right)=\sin\left(\dfrac{u}{2}\right)$
and $h\left(u\right)=u$ satisfying the theoretical assumptions. We
shall construct an approximate solution by steps like in Example 3.

For $k>m$:

\begin{eqnarray}
u_{q}^{k,m}\left(x\right) & = & \frac{\omega}{e^{\frac{11}{2}\left(t_{k}+s_{m}\right)}}\sum_{l=1}^{m}e^{\frac{11}{2}\left(t_{k-m+l}+s_{l}\right)}R_{q-1}^{k-m+l,l}\cos\left(3x\right)\nonumber \\
 &  & +e^{-\frac{11}{2}\left(t_{k}+s_{m}\right)}e^{\frac{11}{2}t_{k-m}}\left(\sin\left(t_{k-m}\right)+2\right)\cos\left(3x\right),\label{eq:39}
\end{eqnarray}

where

\begin{eqnarray*}
R_{q-1}^{k-m+l,l} & = & \frac{2}{\pi}\int_{0}^{\pi}u_{q-1}^{k-m+l,l}\left(x\right)\sin\left(\frac{u_{q-1}^{k-m+l,l}\left(x\right)}{2}\right)\cos\left(3x\right)dx\\
 &  & -\frac{2}{\pi}\int_{0}^{\pi}u_{ex}^{k-m+l,l}\left(x\right)\sin\left(\frac{u_{ex}^{k-m+l,l}\left(x\right)}{2}\right)\cos\left(3x\right)dx\\
 &  & +11\sin\left(t_{k-m+l}\right)+\cos\left(t_{k-m+l}\right)+10e^{-s_{l}}+11.
\end{eqnarray*}

For $m>k$:

\begin{eqnarray}
u_{q}^{k,m}\left(x\right) & = & \frac{\omega}{e^{\frac{11}{2}\left(t_{k}+s_{m}\right)}}\sum_{l=1}^{k}e^{\frac{11}{2}\left(t_{l}+s_{m-k+l}\right)}R_{q-1}^{l,m-k+l}\cos\left(3x\right)\nonumber \\
 &  & +e^{-\frac{11}{2}\left(t_{k}+s_{m}\right)}e^{\frac{11}{2}s_{m-k}}\left(1+e^{-s_{m-k}}\right)\cos\left(3x\right),\label{eq:40}
\end{eqnarray}

where

\begin{eqnarray*}
R_{q-1}^{l,m-k+l} & = & \frac{2}{\pi}\int_{0}^{\pi}u_{q-1}^{l,m-k+l}\left(x\right)\sin\left(\frac{u_{q-1}^{l,m-k+l}\left(x\right)}{2}\right)\cos\left(3x\right)dx\\
 &  & -\frac{2}{\pi}\int_{0}^{\pi}u_{ex}^{l,m-k+l}\left(x\right)\sin\left(\frac{u_{ex}^{l,m-k+l}\left(x\right)}{2}\right)\cos\left(3x\right)dx\\
 &  & +11\sin\left(t_{l}\right)+\cos\left(t_{l}\right)+10e^{-s_{m-k+l}}+11.
\end{eqnarray*}

\subsection{Discussion of results}

\begin{figure}
\subfloat[]{\includegraphics[scale=0.4]{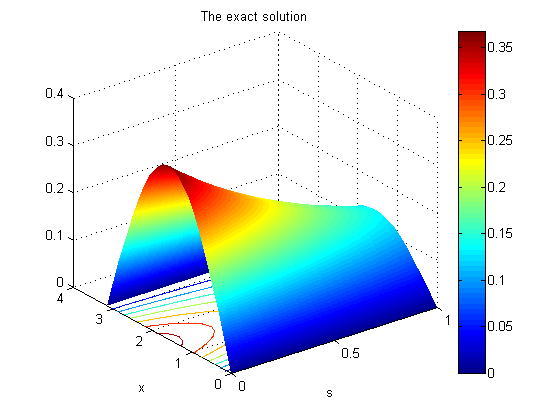}

}\subfloat[]{\includegraphics[scale=0.4]{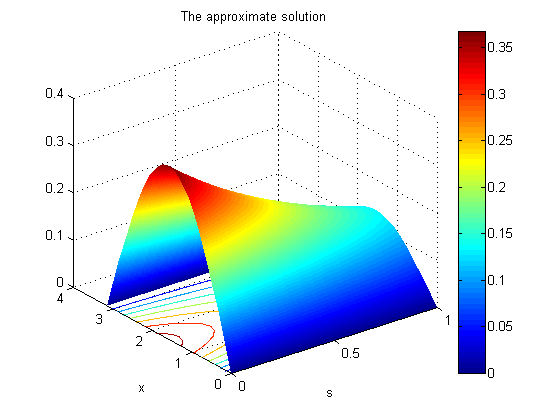}

}

\protect\caption{\small{The exact solution $u_{ex}\left(x,t,s\right)=e^{-2t-s}\sin x$ shown
in (a) in comparison with the approximate solution (\ref{eq:31})-(\ref{eq:32})
shown in (b) at $t=\dfrac{1}{2}$ for Example 1.}}
\end{figure}

\begin{figure}
\subfloat[]{\includegraphics[scale=0.4]{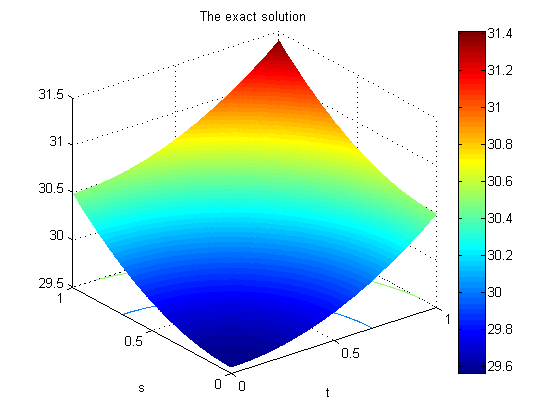}

}
\subfloat[]{\includegraphics[scale=0.4]{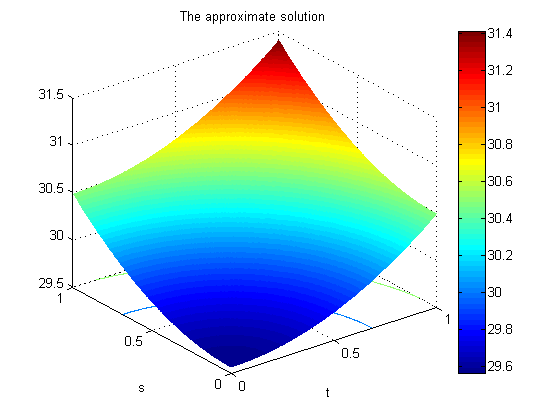}

}

\protect\caption{\small{The exact solution $u_{ex}\left(x,t,s\right)=\left(t^{2}+s^{2}+32\right)\cos\left(\dfrac{x}{2}\right)$
shown in (a) in comparison with the approximate solution (\ref{eq:33-1})-(\ref{eq:34-1})
shown in (b) at $x=\dfrac{\pi}{4}$ for Example 2.}}
\end{figure}

\begin{figure}
\subfloat[]{\includegraphics[scale=0.4]{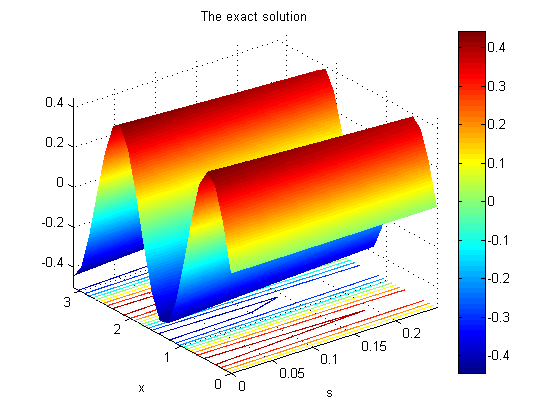}

}\subfloat[]{\includegraphics[scale=0.4]{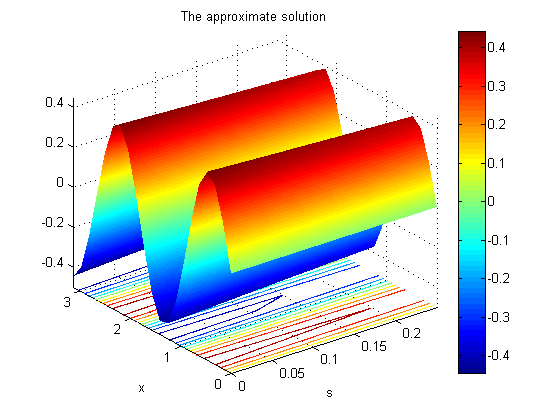}

}

\protect\caption{\small{The exact solution $u_{ex}\left(x,t,s\right)=\dfrac{1}{4}\left(e^{-t}+e^{-s}\right)\sin\left(\dfrac{7x}{2}\right)$
shown in (a) in comparison with the approximate solution (\ref{eq:35-1})-(\ref{eq:37})
shown in (b) at $t=\dfrac{1}{4}$ for Example 3.}}
\end{figure}

\begin{figure}
\subfloat[]{\includegraphics[scale=0.4]{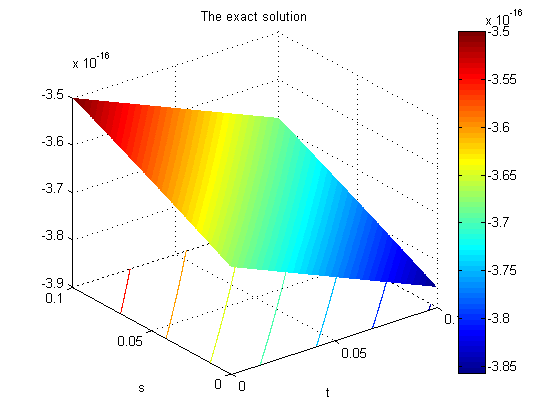}

}\subfloat[]{\includegraphics[scale=0.4]{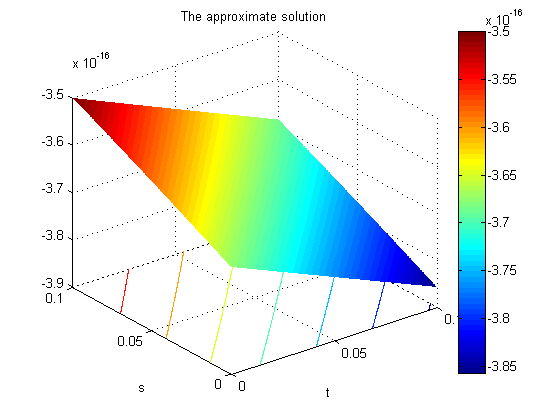}

}

\protect\caption{\small{The exact solution $u_{ex}\left(x,t,s\right)=\left(\sin t+1+e^{-s}\right)\cos\left(3x\right)$
shown in (a) in comparison with the approximate solution (\ref{eq:39})-(\ref{eq:40})
shown in (b) at $x=\dfrac{\pi}{2}$ for Example 4.}}
\end{figure}

Denoting $E=u_{ex}-u$, we compute the discrete $l_{2}$-norm and
$l_{\infty}$-norm of $E$ by

\begin{equation}
\left\Vert E\right\Vert _{l_{2}}=\sqrt{\frac{1}{\mathcal{\left|G\right|}}\sum_{\chi_{G}\in\mathcal{G}}\left|E\left(\chi_{G}\right)\right|^{2}},\quad\left\Vert E\right\Vert _{l_{\infty}}=\max_{\chi_{G}\in\mathcal{G}}\left|E\left(\chi_{G}\right)\right|,\label{eq:35}
\end{equation}

where $\mathcal{G}=\left\{ \chi_{G}\right\} $ is a set of $\left(L+1\right)M^{2}$
points on uniform grid $\left[0,\pi\right]\times\left(0,T\right]\times\left(0,T\right]$
and $\left|\mathcal{G}\right|$ cardinality of $\mathcal{G}$.

In our computations, we always fix $j_{0}=5$ and $L=20$. The comparison
between the exact solutions and the approximate solutions for the
examples respectively are shown in Figure 1-Figure 4 in graphical
representations. As in these figures, we can see that the exact solution
and the approximate solution are close together. Furthermore, convergence
is observed from the computed errors in Table 1-Table 4 for Example
1-Example 4, respectively, which is reasonable for our theoretical
results.

\begin{table}[h!]
\caption{Numerical results (\ref{eq:35}) for Example 1 with $L=20$.}
      \begin{tabular}{ccccc}
        \hline
           & $ M=50 $ & $ M=100 $ & $ M=200 $ & $ M=400 $\\ \hline
        $ \left\Vert E\right\Vert _{l_{2}} $ & 1.51608045E-03 & 7.55346287E-04 & 3.85041789E-04 & 1.88342949E-04\\
        $ \left\Vert E\right\Vert _{l_{\infty}} $ & 3.84188903E-03 & 1.92287169E-03 & 9.61845810E-04 & 4.81024266E-04\\ \hline
      \end{tabular}
\end{table}

\begin{table}
\caption{Numerical results (\ref{eq:35}) for Example 2 with $ L=20 $.}
      \begin{tabular}{ccccc}
        \hline
           & $ M=50 $ & $ M=100 $ & $ M=200 $ & $ M=400 $\\ \hline
        $ \left\Vert E\right\Vert _{l_{2}} $ & 6.53270883E-03 & 3.26622222E-03 & 1.63312276E-03 & 8.16570001E-04\\
        $ \left\Vert E\right\Vert _{l_{\infty}} $ & 2.26666504E-02 & 1.13406619E-02 & 5.67215954E-03 & 2.83653620E-03\\ \hline
      \end{tabular}
\end{table}

\begin{table}[h!]
\caption{Numerical results (\ref{eq:35}) for Example 3 with $ j_{0}=5,L=20 $.}
      \begin{tabular}{ccccc}
        \hline
           & $ q=2,M=50 $ & $ q=3,M=100 $ & $ q=4,M=200 $ & $ q=5,M=400 $\\ \hline
        $ \left\Vert E\right\Vert _{l_{2}} $ & 5.82730398E-03 & 2.90030629E-03 & 1.44171369E-03 & 7.18708022E-04\\
       $ \left\Vert E\right\Vert _{l_{\infty}} $ & 1.16425665E-02 & 5.85110603E-03 & 2.91848574E-03 & 1.45728672E-03\\ \hline
      \end{tabular}
\end{table}

\begin{table}[h!]
\caption{Numerical results (\ref{eq:35}) for Example 4 with $ j_{0}=5,L=20 $.}
      \begin{tabular}{ccccc}
        \hline
           & $ q=2,M=50 $ & $ q=3,M=100 $ & $ q=4,M=200 $ & $ q=5,M=400 $\\ \hline
        $ \left\Vert E\right\Vert _{l_{2}} $ & 5.45295363E-03 & 2.69804976E-03 & 1.34196386E-03 & 6.69222399E-04\\
        $ \left\Vert E\right\Vert _{l_{\infty}} $ & 1.48732036E-02 & 7.42289652E-03 & 3.70802189E-03 & 1.85315435E-03\\ \hline
      \end{tabular}
\end{table}

\section{Conclusion}

In this paper, we have studied the numerical method for solving a
class of nonlinear ultraparabolic equations in abstract Hilbert spaces,
namely problem (\ref{eq:1})-(\ref{eq:2}). Our numerical approach
is based on finite difference method and representation by Fourier
series. The method not only serves the ultraparabolic problems in
multi-space dimension but also deals with a wide class of nonlinear
ultraparabolic problems that many recent studies do not cover. Moreover,
it is useful in numerical simulations when one wants to construct
a stable, reliable and fast convergent approximation. Some stability
results and error estimates are obtained. Lots of numerical examples
are showed to see the efficiency of the method.

In fact, it should be stated that Fourier series expression of solution
may lead a limitation of the method for applications in a complicated
domain where a solution cannot be expressed by a certain series. On
the other hand, numerical method for a class of nonlinear equations
in a large time scale with a better convergence rate should be developed.
All of these issues will be surveyed in a further research.


\begin{backmatter}

\section*{Competing interests}
  The authors declare that they have no competing interests.

\section*{Author's contributions}
    All authors, VAK, LTL, NTYN and NHT contributed to each part of this work equally and read and approved the final version of the manuscript.

\section*{Acknowledgements}
  The authors wish to express their sincere
thanks to the referees for many constructive comments leading to the
improved version of this paper.
\end{backmatter}
\end{document}